\newenvironment{proof}{\noindent {\bf Proof:}}{$\Box$ \vspace{2 ex}}
\newcommand{\C}{\mathbb{C}}
\newcommand{\chr}{{\rm{char}}}
\newcommand{\Z}{\mathbb{Z}}
\newcommand{\Q}{\mathbb{Q}}
\newcommand{\R}{\mathbb{R}}
\newcommand{\F}{\mathbb{F}}
\newcommand{\A}{\mathbb{A}}
\newcommand{\cO}{\mathcal{O}}
\newcommand{\co}{\mathcal{O}}
\newcommand{\beq}{\begin{equation}}
\newcommand{\eeq}{\end{equation}}
\newcommand{\calF}{\mathcal{F}}
\newcommand{\sillyX}{\mathfrak{X}}
\newcommand\Vol{\operatorname{Vol}}
\newcommand\Disc{\operatorname{Disc}}
\newcommand\GL{\operatorname{GL}}
\newcommand\gl{\operatorname{GL}}
\newcommand\fc{\operatorname{fc}}
\renewcommand\b\bullet
\renewcommand\c\times
\definecolor{dgreen}{RGB}{0, 170, 0}
\newcommand{\odr}{\co_{D1^2}}		
\newcommand{\ods}{\co_{D11}}		
\newcommand{\odi}{\co_{D2}}		
\newcommand{\odg}{\co_{D\rm{ns}}}	
\newcommand{\ocs}{\co_{C\rm{s}}}	
\newcommand{\ocg}{\co_{C\rm{ns}}}	
\newcommand{\ots}{\co_{T11}}		
\newcommand{\oti}{\co_{T2}}		
\newcommand{\sump}{\sideset{}{'}\sum}
\newcommand\ldmod{q}
\newcommand\Sym{\operatorname{Sym}}
\newtheorem{proposition}{Proposition}
\newtheorem{theorem}[proposition]{Theorem}
\newtheorem{conclusion}[proposition]{Conclusion}
\newtheorem{lemma}[proposition]{Lemma}
\newtheorem{remark}[proposition]{Remark}
\title{Levels of distribution for sieve problems in prehomogeneous vector spaces}
\author{Takashi Taniguchi and Frank Thorne}
\begin{document}

\maketitle

\begin{abstract}
In our companion paper \cite{TT_orbital}, we developed an efficient algebraic method for computing the Fourier transforms
of certain functions defined on prehomogeneous vector spaces over finite fields, and we carried out these computations in a variety of cases.

Here we develop a method, based on Fourier analysis and algebraic geometry, which exploits these Fourier transform formulas
to yield {\itshape level of distribution} results, in the sense of analytic number theory. Such results are of the shape typically
required for a variety of sieve methods. As an example  of such an 
application we prove that there are $\gg \frac{X}{\log X}$ quartic fields whose discriminant is squarefree, bounded above by $X$,
and has at most eight prime factors.
\end{abstract}

In this paper we will develop a general technique sufficient to prove the following:

\begin{theorem}\label{thm:ap3}
There is an absolute constant $C_3 > 0$ such that for each $X > 0$,
there exist $\geq (C_3 + o_X(1)) \frac{X}{\log X}$ cubic fields $K$
whose discriminant is squarefree, bounded above by $X$, and has at most 
$3$ prime factors.
\end{theorem}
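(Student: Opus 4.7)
The plan is to combine the Davenport--Heilbronn parametrization of cubic fields by $\GL_2(\Z)$-equivalence classes of integral binary cubic forms with a weighted sieve, whose required input --- a level of distribution for the discriminant --- will come from Fourier analysis on $V_\Z = \Sym^3\Z^2$ together with the Fourier transform formulas of the companion paper. First I would translate the desired count into one over the lattice $V_\Z$: a cubic field of squarefree discriminant corresponds, up to a well-understood local correction at each prime (selecting maximal orders and nowhere-totally-ramified types), to a $\GL_2(\Z)$-orbit in $V_\Z$ with prescribed reduction modulo each prime. The sieve problem is then to count orbits in a bounded region $|\disc|\leq X$ whose discriminant, besides being squarefree, has few prime factors.

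To obtain a level of distribution, I would fix a squarefree modulus $q\leq X^\theta$, encode the condition ``$q\mid\disc$'' as a sum of characteristic functions on $V_{\Z/q\Z}$, and apply Poisson summation on $V_\Z$ after smoothing the counting region. The dual sum is a sum of Fourier transforms of these mod-$q$ characteristic functions, evaluated at dual lattice points in the appropriate range. The contribution of the zero frequency produces the expected main term, a multiplicative function $g(q)$ summing the local densities of the conditions being imposed, and the nontrivial frequencies produce an error term. The Fourier transform formulas from \cite{TT_orbital} allow this error to be estimated explicitly, exploiting cancellation and the factorization of Fourier transforms over coprime moduli; assembling the local bounds into a global estimate valid uniformly for $q\leq X^\theta$ is what gives the level of distribution $\theta$.

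Finally, with the level of distribution in place I would feed the counts into a weighted linear sieve of Richert / Diamond--Halberstam--Richert type. Here the sieve dimension is $\kappa = 1$, since $g(p)\sim c/p$ for the density of cubic fields with $p\mid\disc$, and the target ``at most three prime factors'' sets a concrete numerical threshold on $\theta$ --- a larger level of distribution permits fewer prime factors. The main obstacle is therefore squeezing out a sufficiently large $\theta$ from the dual Poisson sum: this requires combining the sharpest available bounds on the Fourier transforms of the local weight functions with careful control of lattice-point sums near the singular locus of $V_\Z$, where the transforms cease to decay. Once $\theta$ clears the needed value, verifying the convergence of the Euler product for the local densities and checking positivity of the sieve lower bound are routine and yield the absolute constant $C_3 > 0$.
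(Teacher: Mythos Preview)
Your proposal is correct and follows essentially the same approach as the paper: smooth the fundamental domain, apply Poisson summation, bound the dual sum via the explicit Fourier transform of the characteristic function of $p\mid\Disc$, and feed the resulting level of distribution into the Richert--Greaves weighted linear sieve. The only implementation details worth flagging are that the paper obtains level $\alpha$ arbitrarily close to $\tfrac12$ by an elementary $O(Y^2)$ count of forms with $\Disc(x)=0$ in a box of side $Y$ (parametrizing them as $(au+bv)^2(cu+dv)$), and it passes from irreducible forms to maximal orders \emph{after} the sieve---squarefreeness of $\Disc$ forces maximality---rather than building local maximality conditions into $\Psi_q$ as you suggest.
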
  

\begin{theorem}\label{thm:ap4}
There is an absolute constant $C_4 > 0$ such that for each $X > 0$,
there exist $\geq (C_4 + o_X(1)) \frac{X}{\log X}$ quartic fields $K$
whose discriminant is squarefree, bounded above by $X$, and has at most 
$8$ prime factors.
\end{theorem}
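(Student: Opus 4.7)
The plan is to apply a weighted sieve to the multiset of absolute discriminants of $S_4$-quartic fields, with the required level of distribution furnished by the Fourier-analytic machinery developed in the remainder of this paper, together with the finite-field Fourier transform formulas of the companion paper \cite{TT_orbital}.

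First I would fix a parametrization. By Bhargava's higher composition laws, isomorphism classes of quartic rings correspond to certain $\GL_2(\Z) \times \SL_3(\Z)$-orbits on $V_\Z$, the lattice of pairs of integral ternary quadratic forms, with the ring discriminant realized as a polynomial invariant on this prehomogeneous vector space. Let $\calF_X$ denote the resulting (suitably weighted) set of maximal rings corresponding to $S_4$-quartic fields with $|\Disc(K)| \leq X$. Bhargava's counting theorem provides the main-term input $|\calF_X| = c_4 X + o(X)$ for an explicit constant $c_4 > 0$.

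Next I would set up the sifting problem. For each squarefree $d$, let $\mathcal{A}_d \subseteq \calF_X$ pick out the fields whose discriminant is divisible by every prime $p \mid d$. One expects an expansion $|\mathcal{A}_d| = g(d)\, c_4 X + r(X,d)$ with $g$ a multiplicative density satisfying $g(p) \asymp 1/p$, so that the sifting dimension is $\kappa = 1$. The central step, and the main obstacle, is to control $r(X,d)$ on average over $d$ up to some level $X^\theta$. To do this I would expand the indicator of $p$-divisibility of the discriminant as a sum of indicators of $(\GL_2 \times \SL_3)(\F_p)$-orbits on $V_{\F_p}$, apply smooth Poisson summation to the resulting lattice sum on $V_\Z$, and insert the closed-form Fourier transforms of these orbit indicators provided by \cite{TT_orbital}. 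Combined with equidistribution and exponential-sum bounds controlling the non-dominant contributions on the dual side, this should yield an admissible $\theta > 0$; this is where the bulk of the paper concentrates, and it is the only step I expect to be genuinely difficult.

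Finally, with $\theta$ and $\kappa = 1$ in hand, and after sieving out non-squarefree discriminants by an analogous $\mathcal{A}_{p^2}$-type analysis at appropriate primes, I would feed the data into the Diamond--Halberstam--Richert weighted sieve. For an input sequence of size $\asymp X$, level of distribution $X^\theta$, and dimension $\kappa = 1$, the weighted sieve yields a lower bound $\gg X/\log X$ on the number of elements with squarefree value and at most $r$ prime factors, where $r$ depends on $\theta$ in an explicit decreasing fashion. The numerical claim ``at most $8$'' emerges as the output of the specific $\theta$ that the Fourier-analytic inputs supply; any improvement in those estimates would propagate directly into a smaller permissible $r$. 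The sieve step itself is essentially off-the-shelf, so the hard part is unambiguously the production of $\theta$.
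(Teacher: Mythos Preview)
Your outline has the correct architecture (Bhargava's parametrization, Poisson summation on $V(\Z)$, the Fourier transform formulas from \cite{TT_orbital}, a weighted sieve of dimension $1$), and you correctly identify the level of distribution as the hard input. However, there is an ordering issue that would cause trouble if implemented as written. You propose to take $\calF_X$ to be the set of maximal $S_4$-quartic orders and then sieve \emph{that} set; but the Poisson summation step you describe applies to the full lattice sum $\sum_{x\in V(\Z)}\phi(xX^{-1/12})$, not to a sum restricted to irreducible maximal points. The paper resolves this by sieving the smoothed count of \emph{all} lattice points first (where Poisson is clean), obtaining the level of distribution $\alpha<7/48$ there, and only \emph{afterwards} discarding reducible points, nonmaximal orders, and orders with $p^2\mid\Disc$ via separate $O(X^{1-\delta})$ estimates. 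If you impose maximality and irreducibility before the sieve, you are forced to carry mod-$p^2$ and global conditions through the Fourier analysis, and Bhargava's counting theorem alone does not supply error terms strong enough for $\alpha$ near $7/48$.

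The second gap is in the phrase ``equidistribution and exponential-sum bounds controlling the non-dominant contributions on the dual side''. After Poisson, the error is a sum of $|\widehat{\Psi_q}(x)|$ over $x$ in a box of side $\asymp qX^{-1/12}$ and over $q\le X^\alpha$. The formulas in \eqref{eq:Psi_fourier_ternary} show that the large values of $|\widehat{\Psi_p}|$ concentrate on low-dimensional $G(\F_p)$-orbits, and the paper's key step is to realize each such orbit inside $\sillyX_i(\F_p)$ for a closed subscheme $\sillyX_i\subset V$ of matching dimension over $\Z$, and then to bound the number of pairs $(x,p)$ with $x$ in the box and $x\bmod p\in\sillyX_i(\F_p)$ by a variant of the Ekedahl--Bhargava geometric sieve. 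This stratified geometric-sieve argument is what produces the specific exponent $\alpha<7/48$ (the bottleneck being the $7$-dimensional stratum), and without naming it your proposal leaves the central estimate unexplained.
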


Theorem \ref{thm:ap3} improves on a result of Belabas and Fouvry \cite{BF} (which in turn improved upon Belabas \cite{bel_sieve}), 
where they obtained the same result with $7$ in place of our $3$, and
our methods are in large part a further development
of their ideas.
In \cite{BF} they remarked that introducing a {\itshape weighted sieve} would lower this $7$ to $4$; the further improvement to $3$ comes from an
improvement in the corresponding {\itshape level of distribution}.
The application to quartic fields is, to our knowledge, new.

Besides the weighted sieve, the main ingredients of our method are unusually strong estimates 
for the relevant 
exponential sums (which we obtained in \cite{TT_orbital}), and a suitably adapted version of the recently established Ekedahl-Bhargava geometric sieve \cite{B_geosieve}.

The method is designed to yield strong {\itshape level of distribution estimates} as inputs to {\itshape sieve methods for prehomogeneous vector spaces}. We consider the following setup:

\begin{enumerate}
\item
A sieve begins with a set of objects $\mathcal{A}$ to be sieved. Here this is defined by a representation $V$
of an algebraic group $G$ which we assume to be {\itshape prehomogeneous}: 
the action of $G(\C)$ on $V(\C)$ has a Zariski-open orbit, defined by the nonvanishing
of a polynomial which we call the {\itshape discriminant}. 
We assume here that $\Disc(gx) = \Disc(x)$ identically 
for all $g \in G(\Z)$, and take as our set $\mathcal{A} = \mathcal{A}(X)$ the set of $G(\Z)$-orbits $x \in V(\Z)$ with
$0 < |\Disc(x)| < X$.

Prehomogeneous vector spaces are the subject of a wealth of parametrization theorems (see, e.g., \cite{WY, HCL1, HCL2, HCL3, HCL4})
and corresponding theoerems concerning the arithmetic objects being parametrized. We refer also to \cite{BH} for a large
number of interesting {\itshape coregular} (not prehomogneous) examples, for which the methods of this paper (and perhaps also
\cite{TT_orbital}) are likely to apply.

\item\label{foo2}
For each prime $p$ we define a notion of an object $x \in \mathcal{A}$ being `bad at $p$'; a typical application of sieve methods is to 
estimate or bound the number of $x \in \mathcal{A}$ which are not bad at any prime $p < Y$,
for some parameter $Y$. For Theorems \ref{thm:ap3} and \ref{thm:ap4} we will take
`bad at $p$' to mean `has discriminant divisible by $p$'. Other defintions of `bad' are also important,
for example the Davenport-Heilbronn nonmaximality condition of \cite{DH, BBP, BST, TT_rc}.

We may work with any definition of `bad' meeting the following technical condition: there is an integer $a \geq 1$ and 
a $G(\Z/p^a\Z)$-invariant 
subset $S_p \subseteq V(\Z/p^a \Z)$, such that $x \in V(\Z)$ is bad if and only if its reduction $\pmod {p^a}$ is in $S_p$.

\item\label{foo}
For each squarefree integer $q$, we require estimates for the number of $x \in \mathcal{A}(X)$ bad at each prime dividing $q$.
These may be obtained via the geometry of numbers (see, among other references, \cite{DH, bel_sieve, BF, B_quintic, BST, EPW}) or using Shintani zeta functions \cite{TT_rc},
and we develop a third (simpler) method here.

Here these estimates will be of the form $C \omega(q) X^{r/d} + E(X, q)$ for a fixed constant $C$ and multiplicative function $\omega$,
and an error term $E(X, q)$ which we want to bound.

\item
In cases where $\mathcal{A}$ depends on a parameter $X$,
a {\itshape level of distribution} is any $\alpha > 0$ for which the sum $\sum_{q \leq X^{\alpha}} |E(X, q)|$ is adequately small.
Typically, and here, a cumulative error $\ll_A X (\log X)^{-A}$ (for each $A > 0$) is more than sufficient -- but see \cite{BST, TT_rc, ST5}
for examples where a power savings is relevant.
\end{enumerate}

We refer to books such as \cite{CM} and \cite{ODC} for examples of sieve methods and applications; many of the methods can be used essentially
as black boxes, for which the level of distribution is the most important input. {\itshape It is the goal of this paper to develop a method
for proving levels of distribution for prehomogeneous vector spaces which are quantitatively as strong as possible.}

\medskip
Typically, an important ingredient is finite Fourier analysis. Let $\Psi_p : V(\Z/p^a \Z) \rightarrow \C$
be the characteristic function of the subset $S_p$ described above, and let $\widehat{\Psi_p}$ be its Fourier transform
(defined by \eqref{eqn:ft_intro} below). We expect upper bounds on $E(X, p)$ to follow from statements to the effect that
that the function $\Psi_p$ is equidistributed, and upper bounds on the $L_1$-norm of $\widehat{\Psi_p}$ constitute a strong quantitatve
statement of this equidistribution. 

The basic heuristic of this paper is that {\itshape $L_1$ norm bounds for Fourier transforms over finite fields should lead to level of distribution statements for arithmetic
objects}. For examples carried out via the geometry of numbers we refer to \cite{BF}, \cite[(80)-(83)]{BST}, and \cite[Proposition 9.2]{FK};
for examples using the Shintani zeta function method we refer to \cite{TT_rc} (espectially Section 3) or the forthcoming work \cite{PT}. 

In the present paper
we develop a simpler version of this idea, not requiring any knowledge of the geometry of the `cusps' or of the analytic
behavior of the zeta function, in the context of a {\itshape lower bound sieve}. For any Schwartz function $\phi$, the Poisson summation formula
takes the form
\begin{align}\label{eq:poisson_error_0}
\sum_{x \in V(\Z)} \Psi_q(x) \phi(xX^{-1/d}) & = 
X^{\dim(V)/d} \sum_{w \in V^*(\Z)} \widehat{\Psi_q}(w) \widehat{\phi}\left(\frac{w X^{1/d}}{q} \right),
\end{align}
and for suitable $\phi$ the left side will be a smoothed undercount of the number of 
$G(\Z)$-orbits $x \in V(\Z)$ with $0 < |\Disc(x)| < X$, satisfying the `local conditions' described by $\Psi_q$. 
The left side of \eqref{eq:poisson_error_0} takes the role of (the counting function of) $\mathcal{A}(X)$, and the function
$\Psi_q$ may be used to detect those $x$ which are `bad at $q$'.

On the right side of \eqref{eq:poisson_error}, the $w = 0$ term the expected main term, and
the rapid decay of $\widehat{\phi}$ will imply that the error term is effectively bounded by a sum of $|\widehat{\Psi_q}(w)|$
over a box of side length $\ll q X^{-1/d}$. 

Therefore we are reduced to bounding sums of 
$|\widehat{\Psi_q}(w)|$ over boxes and over ranges of squarefree $q$. Obviously we require bounds on the individual
$|\widehat{\Psi_q}(w)|$ to proceed. At this point we could incorporate the general bounds of 
Fouvry and Katz \cite{FK}; indeed, our method would quite directly exploit the geometric structure of their results.
However, in the cases of interest much stronger bounds hold for the $|\widehat{\Psi_q}(w)|$ than are proved in \cite{FK}.
We proved this in \cite{TT_orbital} as a special case of exact formulas for $\widehat{\Psi_q}(w)$, for any
of five prehomogeneous vector spaces $V$, any squarefree integer $q$, and any $G(\Z/q\Z)$-invariant function
$\Psi_q$.

Our $L_1$-norm heuristic arises by replacing each $|\widehat{\Psi_q}(w)|$ by its average over $V^*(\Z/q\Z)$. 
In practice there are limits to the equidistribution of $|\widehat{\Psi_q}(w)|$, and
algebraic geometry now takes center stage. For $q = p \neq 2$ prime, the largest values of 
$|\widehat{\Psi_p}(w)|$ will be confined to $w \in \sillyX(\F_p)$ for a {\itshape scheme} $\sillyX$, defined over $\Z$, and of high
codimension. (The same is also true of the Fouvry-Katz bounds.)

We now apply the {\itshape Ekedahl-Bhargava} geometric sieve \cite{B_geosieve}, which essentially bounds the number of 
pairs $(w, p)$ with $w$ in our box and $p$ prime. As we must work with general squarefree integers $q$, and a filtration of schemes
$\sillyX_i$ rather than just one fixed $\sillyX$, we develop a variation of the geometric
sieve adapted to this counting problem.

\medskip
{\itshape Organization of this paper.}
For simplicity we structure our paper around the proof of Theorem \ref{thm:ap4}, even though our more important aim is to present a method
which works in much greater generality. We begin in Section \ref{sec:properties} by introducing the prehomogeneous vector
space $(G, V) = (\GL_2 \times \GL_3, 2\otimes \Sym_2(3))$ and describing
its relevant properties. We take some care to delineate which of its
properties are relevant to the proof, so that the reader can see what is required to adapt our method to other prehomogeneous
vector spaces and to other sieve problems.

In Section \ref{sec:explain_ld} we introduce the {\itshape weighted sieve} of Richert \cite{richert} and Greaves \cite{greaves}, used to conclude Theorems \ref{thm:ap3} and \ref{thm:ap4},
and we also precisely formulate the level of distribution statement which it will require.

In Section \ref{sec:smoothing} we introduce our smoothing method. Our main result is Proposition \ref{prop:gen_ld_simp}, which states that a level
of distribution follows from an essentially combinatorial estimate. The proof is a fairly typical application of Poisson summation, and follows along
lines that should be familiar to experts. The proof is carried out in a general setting, and we state all of our assumptions at the beginning of the section. 

Section \ref{sec:ap3} proves Theorem \ref{thm:ap3}, and may be skipped without loss of continuity. Here the geometry is simple enough that
we may conclude without introducing any algebro-geometric machinery, and the argument may be read as a prototype for the generalities which follow.

In Section \ref{subsec:2sym23-subschemes} we introduce some algebraic geometry to describe the $G(\F_p)$-orbits on $V(\F_p)$ 
in terms of schemes defined over $\Z$. 
This sets up an application of the Ekedahl-Bhargava geometric sieve \cite{B_geosieve}, of which we develop
a variation in Section \ref{sec:geom}.

Section \ref{sec:ld} is the heart of the proof, further developing the geometric sieve to prove
our level of distribution statement. The proof is specalized
to the particular $(G, V)$ and $\Phi_q$ being studied, but the generalization to other cases should be immediate.

Finally,  in Section \ref{sec:ld_implies} we prove Theorem \ref{thm:ap4}. Essentially the
proof is a formal consequence of our level of distribution, although there are a few technicalities pertaining to this particular
$(G, V)$ and its arithmetic interpretation.

\medskip
{\itshape Remark.} The quartic fields produced by Theorem \ref{thm:ap4} will all have Galois group $S_4$, and the theorem still holds
if we specify that $\Disc(K)$ should be positive or negative, or indeed that $K$ has a fixed number of real embeddings. 

There are other methods for producing almost-prime quartic field discriminants,
One, suggested to us by Bhargava, is to specialize $11$ of the $12$ variables in $V$ to particular values and then
apply results on polynomials in one variable; a second is to apply results \cite{CT4} on quartic fields with fixed cubic resolvent. 

In either case we obtain quartic field discriminants with fewer than $8$ prime factors. but it is unclear how to get, say,
$\gg X^{9/10}$ of them, let alone $\gg X (\log X)^{-1}$. Moreover, proving that one obtains {\itshape fundamental} discriminants seems to be
nontrivial with the first method, and the second method intrinsically produces non-fundamental discriminants.

\medskip
{\itshape Notation.} We observe the following conventions in this paper. $x$ will denote a general element of $V(\Z)$ (sometimes only up
to $G(\Z)$-equivalence). $r$ will denote the dimension of $V$ and $d$ will denote the degree of its (homogeneous) `discriminant' polynomial.
$X$ will indicate a discriminant bound, and in Section \ref{sec:explain_ld} we write $X$ and $Y$ in place of the $x$ and $X$ of \cite{ODC}.
$p$ will denote a prime and $q$ a squarefree integer, in contrast to \cite{TT_orbital} where $q$ was used for the cardinality of a finite field.


\section{The `quartic' representation and its essential properties}\label{sec:properties}

For each ring $R$ (commutative, with unit), let $V(R) = R^2\otimes\Sym_2(R^3)$ be
the set of pairs of ternary quadratic forms with coefficients in $R$; when $2$ is not a zero divisor in $R$,
we also regard $V(R)$ as the set of pairs of   $3 \times 3$ symmetric matrices. Let
$G(R) := \GL_2(R) \times \GL_3(R)$;
there is an action of $G$ on $V$, defined by
\[
(g_2, g_3) \cdot (A, B) = (r \cdot g_3 A g_3^T + s \cdot g_3 B g_3^T, t \cdot g_3 A g_3^T + 
u \cdot g_3 B g_3^T),
\]
where $g_2 := \begin{pmatrix} r & s \\ t & u \end{pmatrix}$.

The {\itshape discriminant} is defined (see \cite[p. 1340]{HCL3}) by the equation
\begin{equation}\label{def:disc}
\Disc((A, B)) := \Disc(4 \det(Ax + By)),
\end{equation}
where $4 \det(Ax + By)$ is a binary cubic form in the variables $x$ and $y$, and the second `$\Disc$' above is its
discriminant. 

It was proved by 
Bhargava \cite{HCL3} that the $G(\Z)$-orbits on $V(\Z)$ parametrize quartic rings, in a sense
that we recall precisely in Section \ref{sec:ld_implies}.
(This parametrization, together with a geometry of numbers argument, allowed Bhargava to prove 
\cite{B_quartic} an asymptotic formula for the number of quartic fields
of bounded discriminant.)

\smallskip
We note the following additional facts about this representation. Although we will not attempt to axiomatize our method here,
these are the inputs required to establish a lower bound sieve for $G(\Z)$-orbits on $V(\Z)$. (Arithmetic applications, such as passing from quartic rings to
quartic {\itshape fields} as we do
in Section \ref{sec:ld_implies}, may in some cases require extra steps which will not generalize as readily.)

\begin{enumerate}
\item ({\itshape Homogeneity of the discriminant; definitions of $r$ and $d$.})
By \eqref{def:disc}, $\Disc$ is a homogeneous polynomial of degree $12$ on $V$.
We write $d = 12$ for the degree of this polynomial, and $r := \dim(V) = 12$; these quantities coincide in this and other interesting cases, but not always.

\item ({\itshape Approximation of the fundamental domain.})
Let $\mathcal{F}$ be a fundamental domain for the action of $G(\Z)$ on $V(\R)$, and let 
$\mathcal{F}^1$ be the subset of $x \in \calF$ with $0 < |\Disc(x)| < 1$.
We approximate $\mathcal{F}^1$
by choosing a smooth (Schwartz class) function $\phi: V(\R) \rightarrow [0, 1]$ compactly supported within $\calF^1$.

Since the discriminant is homogeneous of degree $d$, 
the weighting function $\phi(x X^{-1/d})$ is a smoothed undercount of those $x \in G(\Z) \backslash V(\Z)$
with $0 < |\Disc(x)| < X$. That is, 
the role of the (counting function of the) set $\mathcal{A}(X)$ described in the introduction is taken
by the expression
\beq\label{eqn:smoothed_AX}
\sum_{\substack{x \in V(\Z)}} \phi(x X^{-1/d}).
\eeq

Although it won't be necessary here, it is possible to approximate $G(\Z) \backslash V(\R)$ as closely as we wish in the sense that,
for any $\beta < 1$, we may additionally require the locus $\mathcal{F}_\beta$ of $x$ with $\phi(x) = 1$ to satisfy
\beq
\frac{ \Vol(\mathcal{F}_\beta) }{ \Vol(\mathcal{F}^1)} > \beta.
\eeq


A number of variations are possible; for example we could restrict $\mathcal{F}^1$ to those 
$x \in \mathcal{F}$ with a particular sign, or choose $\phi$
to be supported away from any algebraic subset of $V(\R)$ defined by the vanishing of one or more homogeneous
equations.

\item ({\itshape Fourier transform formulas.}) For a squarefree integer $q$ we let $\Psi_q$ be the characteristic function of those
$x \in V(\Z)$ with $q \mid \Disc(x)$; this function factors through the reduction map
$V(\Z) \rightarrow V(\Z/q\Z)$. Its Fourier transform
$\widehat{\Psi_q}\colon V^\ast(\Z/q\Z) \rightarrow \C$
is defined by the usual formula
\beq\label{eqn:ft_intro}
\widehat{\Psi_q}(x) = q^{-r} \sum_{x' \in V(\Z/q \Z)} \Psi_q(x') \exp\left(\frac{2 \pi i [x', x]}{q}\right).
\eeq

The Fourier transform $\widehat{\Psi_q}$ is easily seen
to be multiplicative in $q$, and for $q = p \neq 2$ we
proved the following explicit formula
in \cite{TT_orbital}:
\begin{equation}\label{eq:Psi_fourier_ternary}
\widehat{\Psi_p}(x)=
\begin{cases}
p^{-1} + 2p^{-2} - p^{-3} - 2 p^{-4} - p^{-5} + 2p^{-6} + p^{-7} - p^{-8}
	& x \in \co_0,\\
p^{-3} - p^{-4} - 2p^{-5} + 2p^{-6} + p^{-7} - p^{-8}
	& x \in \odr,\\
2p^{-4} - 5p^{-5} + 3p^{-6} + p^{-7} - p^{-8}
	& x \in \ods,\\
p^{-4} - 3p^{-5} + 2p^{-6} + p^{-7} - p^{-8}
	& x \in \ocs,\\
- p^{-5} + p^{-6} + p^{-7} - p^{-8}
	& x \in \odi, \odg, \ocg, \ots, \oti,\\
-p^{-6} + 2p^{-7} - p^{-8}
	& x \in \cO_{1^2 1^2}, \\
p^{-6} - p^{-8}
	& x \in \cO_{2^2}, \\
p^{-7} - p^{-8}
	& x \in \co_{1^4}, \co_{1^3 1}, \co_{1^2 11}, \co_{1^2 2}, \\
-p^{-8}
	& x \in \cO_{1111}, \cO_{112}, \cO_{22}, \cO_{13}, \cO_{4}.
\end{cases}
\end{equation}

When $p \neq 2$ there are $20$ orbits for the action of
$G(\F_p)$ on $V^\ast(\F_p)$, and each has a description that is essentially uniform in $p$; these are denoted by
the $\cO$ above, or by $\cO(p)$ when we indicate the prime $p$ explicitly.
We refer to \cite{TT_orbital} for descriptions of each of the $\cO$,
together with computations of their cardinalities.

The $L_1$ norm of $\widehat{\Psi_p}(x)$ is $O(p^4)$ -- better than square root cancellation! In particular the larger contributions come from
the more singular orbits, and {\itshape our methods are designed to exploit this structure.}

What is required in general is that $\Psi_p$ be any bounded function,
which factors through the reduction map $V(\Z) \rightarrow V(\Z/p^a\Z)$
for some $a \geq 1$, for which we can compute or bound the Fourier transform.
(Incorporating the trivial bound $|\widehat{\Psi_p}(x)| \ll 1$ yields results which are in some sense nontrivial, but our interest is in doing better.)

In \cite{TT_orbital}, explicit formulas like \eqref{eq:Psi_fourier_ternary} are computed for any function $\Psi_p$ for which $\Psi_p(gx) = \Psi_p(x)$ for all
$g \in G(\F_p)$ when $a=1$.

\item ({\itshape Orbits in geometric terms.}) 
For each orbit description $\cO$,
there exists an integer $i = i(\cO) \in [0, d]$ such that $\# \co(p) \asymp_{\co} p^i$ as $p$ ranges; we call this integer the
dimension of $\cO$.
We will show in Section \ref{subsec:2sym23-subschemes}
that there also exists a closed {\itshape subscheme}
$\sillyX \subseteq V$ defined over $\Z$
of the same dimension $i(\cO)$, 
such that $\co(p) \subseteq \sillyX(\F_p)$ for all but (possibly) finitely many primes
$p$. As we will see, this will allow lattice point counting methods which use algebraic geometry.

\begin{remark} The `schemes' in question are simply 
the vanishing loci of systems of polynomials defined over $\Z$, and the algebraic geometry to be invoked will be fairly elementary.
However, one can study related problems using very sophisticated algebro-geometric tools; see for example \cite{DG, FK}.
\end{remark}

\end{enumerate}

\section{Levels of distribution and the weighted sieve}\label{sec:explain_ld}

We begin by discussing this sieve machinery we will apply.
In some (but not complete) generality, a {\itshape level of distribution} describes the following. Suppose that $a(n) : \Z^+ \rightarrow [0, \infty)$
is a function for which we can prove, for each squarefree integer $q$ (including $q = 1$), an estimate of the shape
\beq\label{eq:sieve2}
\sum_{\substack{n < X \\ q \mid n}} a(n) = \omega(q) C Y + E(X, q)
\eeq
for some constant $C$, multiplicative function $\omega(q)$ satisfying $0 \leq \omega(q) < 1$ for all $q$, function $Y$ of $X$, 
and 
error term $E(X, q)$.
With the setup described in Section \ref{sec:properties} we will have $Y = X^{r/d}$ with
\beq\label{eq:construct_weights}
a(n) = \sum_{\substack{x \in V(\Z) \\ |\Disc(x)| = n}} \phi(x X^{-1/d}).
\eeq

We say that the function $a(n)$ has {\itshape level of distribution}
$\alpha$ if for any $\epsilon > 0$ we have
\beq\label{eq:sieve3}
\sum_{q < X^{\alpha}} |E(X, q)| \ll_{\epsilon} Y^{1 - \epsilon},
\eeq
where the sum is over squarefree integers $q$ only. 
Bounds of the shape \eqref{eq:sieve3} are required for essentially all sieve methods, and also in many other 
analytic number theory techniques.

For our formulation of the {\itshape weighted sieve} we will 
also demand a {\itshape one-sided linear sieve inequality}
\beq\label{eq:ls}
\prod_{w \leq p < z} (1 - \omega(p))^{-1} \leq 
K \left( \frac{ \log z}{ \log w} \right)
\eeq
for all $2 \leq w < z$ and some fixed constant $K \geq 1$; the product is over primes. A familiar computation (see, for example,
\cite[(5.34)-(5.37)]{ODC}) shows that \eqref{eq:ls} holds if we assume for all prime $p$ that $w(p) < 1$ and that
\beq\label{eq:ls2}
\bigg|w(p) - \frac{1}{p}\bigg| < \frac{C}{p^2}
\eeq
for a fixed constant $C$, on which the constant
$K$ of \eqref{eq:ls} depends. (Conditions such as \eqref{eq:ls} and \eqref{eq:ls2} are often required in sieve methods,
and
may appear in a variety of guises.)

The {\itshape weighted sieve}, developed principally by Richert \cite{richert} and Greaves \cite{greaves}, and described here in the
formulation of Friedlander and Iwaniec \cite[Theorem 25.1]{ODC}, detects almost prime values of $n$ in the sequence $a(n)$.

\begin{theorem}[The weighted sieve \cite{richert, greaves}]\label{thm:ls}
Assume \eqref{eq:sieve2}, \eqref{eq:sieve3}, and \eqref{eq:ls}, and let $t \geq \frac{1}{\alpha} + \frac{\log 4}{\log 3} - 1$ be a positive integer.
Then we have
\beq\label{eq:ls_result}
\sum_{\substack{n \leq X \\ p \mid n \Rightarrow p > X^{\alpha/4} \\ \nu(n) \leq t}} a(n) \gg \frac{Y}{\log X},
\eeq
where $\nu(n)$ denotes the number of prime divisors of $n$.
\end{theorem}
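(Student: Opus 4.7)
This theorem is the Richert--Greaves weighted sieve, and in the formulation given it is proved as \cite[Theorem 25.1]{ODC}; I would simply invoke it. Nevertheless, let me outline the scheme of proof, which combines a Rosser--Iwaniec linear sieve lower bound with Richert--Greaves penalty weights designed to suppress integers having too many prime factors.

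The plan is in three steps. First, apply the lower-bound linear sieve to the sequence $a(n)$ with sifting parameter $z = X^{\alpha/4}$. The one-sided inequality \eqref{eq:ls} supplies the required one-dimensional sieve hypothesis, while \eqref{eq:sieve3} controls the remainder at level $D = X^{\alpha}$. Since $s = \log D/\log z = 4 \geq 2$, the sieve function $f(s)$ is strictly positive and one obtains
\[
\sum_{\substack{n \leq X \\ p \mid n \Rightarrow p \geq z}} a(n) \;\geq\; \bigl(f(4) + o(1)\bigr) \cdot C Y \prod_{p < z}\bigl(1 - \omega(p)\bigr),
\]
where the Euler product is $\gg 1/\log X$ by \eqref{eq:ls}. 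This is already a positive lower bound of the correct order $Y/\log X$, but it overcounts integers with many prime factors above $z$.

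Second, attach Richert--Greaves weights of the rough form
\[
W_n \;=\; 1 \;-\; \theta \sum_{\substack{p \mid n \\ z \leq p < X^{1/(t+1)}}} \left(1 - (t+1)\frac{\log p}{\log X}\right),
\]
with $\theta$ calibrated so that $W_n \leq 0$ whenever $\nu(n) > t$. Then $\sum_{p\mid n \Rightarrow p \geq z} a(n) W_n$ is an undercount of the left-hand side of \eqref{eq:ls_result}. The contribution of the constant ``$1$'' is bounded below by the estimate from step one. The contribution of the second piece expands into a sum over primes $p \in [z, X^{1/(t+1)})$ of the inner sums $\sum_{p\mid n} a(n)$, each of which is evaluated using \eqref{eq:sieve2}, with the remainders controlled by \eqref{eq:sieve3}; the resulting double sum over $p$ is then handled by Mertens-type asymptotics following from \eqref{eq:ls}.

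The main obstacle is the third step: showing that the ``$1$''-contribution strictly exceeds the penalty contribution, so that their difference is $\gg Y/\log X$. This reduces to a purely analytic inequality comparing the Rosser--Iwaniec upper and lower functions $F$ and $f$ against the weighted tail $\theta \cdot (\text{Mertens sum})$, and one optimizes $\theta$ and the weight cutoff $X^{1/(t+1)}$. Richert's original choice produces a weaker threshold; it is Greaves' refinement, involving a sharper choice of weights and a more delicate analysis of $F$ and $f$ near $s=2$, that yields the specific constant $\log 4/\log 3$ and hence the stated bound $t \geq 1/\alpha + \log 4/\log 3 - 1$. Since this optimization is independent of the particular arithmetic sequence, I would extract it directly from \cite{ODC} rather than redo it here.
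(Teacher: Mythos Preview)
Your proposal is correct and matches the paper's approach: the paper does not prove this theorem either, but simply cites \cite[Theorem 25.1]{ODC} and records the parameter choices needed to extract the stated form (take $N=1$ in their (25.7), and $u=1$ so that $\delta(u)=\log 4/\log 3$, with $V(X)\gg\log X$ following from \eqref{eq:ls}). Your sketch of the underlying Richert--Greaves mechanism is a reasonable bonus, though the paper omits it entirely; the one thing you might add to align fully with the paper is to note explicitly which choices in the Friedlander--Iwaniec formulation produce the constant $\log 4/\log 3$ and the prime threshold $X^{\alpha/4}$.
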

This is one of many sieve methods which establish various consequences from 
hypotheses of the form \eqref{eq:sieve2}-\eqref{eq:ls}. We refer to 
\cite{ODC} for a nice overview of many different sieve methods and their applications, and 
to \cite{BBP, BF, B_quartic, B_quintic, B_geosieve, 
BCT, BST, FK, ST5, TT_rc} for applications concerning
prehomogeneous vector spaces. The papers \cite{BBP, B_quartic, B_quintic, BST, TT_rc} sieve rings for maximality, where the
analogue of $\omega(q)$ is roughly $1/q^2$; conversely, \cite{BCT, ST5} illustrate sieves where
$\omega(q)$ is not a decreasing function of $q$.

\smallskip
{\itshape Technical notes.}
Theorem \ref{thm:ls} may be deduced from the precise statement of Theorem 25.1 of \cite{ODC} as follows.
We take $N = 1$ in (25.7), corresponding to \eqref{eq:sieve2}. We choose $u = 1$
so that $\delta(u) = \frac{\log 4}{\log 3}$ (this is the limit as $u \rightarrow 1$ of the expression in (25.17)), and we
have $V(X) \gg_{\alpha} \log X$ by \eqref{eq:ls}.


As mentioned in \cite{ODC}, Greaves proved \cite[Chapter 5]{greaves} a related result with $\frac{\log 4}{\log 3} = 1.261\dots$ replaced with 
$1.124\dots$ Since we will eventually obtain $\alpha = \frac{7}{48}$ for the representation $\Z^2\otimes \Sym_2(\Z^3)$, this would yield
quartic field discriminants with only $7$ prime factors in Theorem \ref{thm:ap4}. But since our main goal is to showcase our sieve method,
we have chosen to apply a form of the weighted sieve
that is easier to extract from the literature. Note that the lower bound $p > X^{\alpha / 4}$ will be important in Section \ref{sec:ld_implies}.

\section{Smoothing and the Poisson summation formula}\label{sec:smoothing}

{\itshape Assumptions.} Until Section \ref{sec:reform}, the analysis in this section is quite general (and very standard). $V(\Z)$ will
denote a complete lattice in a vector space $V(\R)$ of dimension $r$. ($V$ itself will denote an $r$-dimensional affine space over $\Z$.)
In what follows $d$ will be the (homogeneous) degree of the discriminant polynomial, but in this section (where such a polynomial need not be defined) 
$d$ may be any positive real constant.
$X$ will be 
a positive real number; and for each squarefree
integer $q$, $\Psi_q : V(\Z) \rightarrow \C$ is any function which factors through the reduction map $V(\Z) \rightarrow V(\Z/q\Z)$.
All sums over $q$ will implicitly be over {\itshape squarefree} positive integers $q$ only.
We assume for simplicity that $|\Psi_q(x)| \leq 1$ for all $q$ and $x$. Finally, $\phi$ will denote any fixed smooth, Schwartz class function,
on which all implied constants below are allowed to depend.

\medskip

The aim of this section is to estimate the values of the sum
\begin{equation}\label{eq:intro_sum}
\sum_{x \in V(\Z)} \Psi_\ldmod(x) \phi(xX^{-1/d}),
\end{equation}
and in particular to prove upper bounds for the error terms, summed over $q$.
In the general setting of Section \ref{sec:properties} this is a smoothed undercount of
those $x \in G(\Z) \backslash V(\Z)$ with $0 < \pm \Disc(x) < X$ satisfying the 
congruence conditions implied by the function $\Psi_q$. 
In the more specific setting
of the proof of Theorem \ref{thm:ap4}, $\Psi_q$ is the characteristic function of those
$x$ with $q \mid \Disc(x)$, so that \eqref{eq:intro_sum} counts discriminants divisible by $q$.

By Poisson summation and a standard unfolding argument, we may check that
\begin{align}\label{eq:poisson_error}
\sum_{x \in V(\Z)} \Psi_\ldmod(x) \phi(xX^{-1/d}) & = X^{r/d} \sum_{x \in V^*(\Z)} \widehat{\Psi_\ldmod}(x) \widehat{\phi}\left(\frac{x X^{1/d}}{\ldmod} \right)\\
& = \widehat{\Psi_q}(0) \widehat{\phi}(0) X^{r/d} + E(X, \Psi_\ldmod, \phi),
\end{align}
where the error term $E(X, \Psi_\ldmod, \phi)$ is defined by
\beq\label{eq:def_error}
E(X, \Psi_\ldmod, \phi) := X^{r/d} \sum_{0 \neq x \in V^\ast(\Z)} \widehat{\Psi_\ldmod}(x) \widehat{\phi}\left(\frac{x X^{1/d}}{\ldmod} \right),
\eeq
and $\widehat{\phi}$ satisfies the rapid decay property 
\beq\label{eq:rapid_decay}
|\widehat{\phi}(y)| \ll_{A} (1 + |y|)^{-A}
\eeq
for every $A > 0$ and every
$y \in V^\ast(\R)$.
(Here $|y|^2 := y_1^2 + \cdots + y_{d}^2$.) 

\medskip
With an eye to \eqref{eq:sieve3}, we desire the following conclusion:
\begin{conclusion}[Level of distribution $\alpha$]\label{prop:gen_ld}
We have, for a parameters $\alpha > 0$ to be determined,
that the following inequality holds for
some $c < r/d$:
\beq\label{eq:gen_ld}
\sum_{\ldmod < X^{\alpha}} |E(X, \Psi_\ldmod, \phi)| \ll X^c.
\eeq
\end{conclusion}

\medskip

We will now prove that this conclusion is implied by the
more combinatorial statements of \eqref{eq:ld_simp} in
Proposition \ref{prop:gen_ld_simp} or \eqref{eq:ld_simp2} in Proposition \ref{prop:gen_ld_simp2}.

\smallskip 

For a parameter $Z > 0$, denote by $E_{\leq Z}(X, \Psi_\ldmod, \phi)$ the contribution to $E(X, \Psi_\ldmod, \phi)$ from those $x$ whose coordinates
are all bounded by $Z$, and write $E_{> Z}(X, \Psi_\ldmod, \phi)$ for the remaining contribution.

\begin{lemma}
For any $Z > 0$ and $A > d$ we have
\beq\label{eq:error_bound1}
E_{> Z}(X, \Psi_\ldmod, \phi)
\ll_{A}
X^{r/d} \left( \frac{\ldmod}{X^{1/d}} \right)^A Z^{-A + d},
\eeq
and if $Z := \ldmod X^{-1/d + \eta}$ for a fixed constant $\eta > 0$ and $\ldmod < X$ we have, for any $B > 0$,
\beq\label{eq:error_bound2}
E_{> Z}(X, \Psi_\ldmod, \phi)
\ll_{B, \eta} X^{-B}.
\eeq
\end{lemma}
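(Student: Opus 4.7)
The plan is to combine the trivial Fourier bound $|\widehat{\Psi_q}(x)|\le 1$ (which follows from \eqref{eqn:ft_intro} and $|\Psi_q|\le 1$) with the rapid-decay estimate \eqref{eq:rapid_decay}. Applied directly to the definition \eqref{eq:def_error}, this yields
\begin{equation*}
|E_{>Z}(X,\Psi_q,\phi)|\ll_A X^{r/d}\sum_{\substack{x\in V^{\ast}(\Z)\\ |x|_\infty>Z}}\Bigl(1+\tfrac{|x|X^{1/d}}{q}\Bigr)^{-A},
\end{equation*}
reducing the task to bounding a purely geometric lattice sum. I would then dyadically split the range as $|x|_\infty\in[2^{j}Z,2^{j+1}Z)$ for $j\ge 0$; each shell contains $O((2^{j}Z)^{r})$ lattice points, and (provided $ZX^{1/d}/q\ge 1$) the decaying factor on the shell is bounded by $(q/X^{1/d})^{A}(2^{j}Z)^{-A}$. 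Summing the resulting geometric series in $j$ (convergent as soon as $A$ exceeds the lattice dimension) produces the claimed bound \eqref{eq:error_bound1}.

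For \eqref{eq:error_bound2}, I would simply substitute $Z=qX^{-1/d+\eta}$ into \eqref{eq:error_bound1}. This makes $ZX^{1/d}/q=X^{\eta}$, so the bound collapses to
\begin{equation*}
X^{r/d}\Bigl(\tfrac{q}{X^{1/d}}\Bigr)^{A}Z^{d-A}=X^{r/d}\cdot Z^{d}\cdot X^{-A\eta}\ll X^{(r/d)+d-A\eta},
\end{equation*}
using $Z\le q<X$. Taking $A$ as large as desired (permitted by \eqref{eq:rapid_decay}) drives the exponent of $X$ arbitrarily negative, yielding $\ll_{B,\eta}X^{-B}$.

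The only point requiring care is the regime $Z<q/X^{1/d}$, where the dyadic argument above breaks down because the factor $(1+|x|X^{1/d}/q)^{-A}$ is of size $1$ on the inner shells. In that range, however, one checks that the claimed right-hand side $X^{r/d}(q/X^{1/d})^{A}Z^{-A+d}$ already exceeds $X^{r/d}$, whereas $|E_{>Z}|\le|E|\ll X^{r/d}$ follows from the same trivial Fourier bound applied to the complete absolutely convergent lattice sum; hence \eqref{eq:error_bound1} is automatic there. I expect no deeper obstacle than this bookkeeping: the entire proof rests on the trivial Fourier bound together with lattice-point counting against a Schwartz tail, and is standard.
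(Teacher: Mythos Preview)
Your approach is essentially the paper's: bound $|\widehat{\Psi_q}|\le 1$, invoke the rapid decay of $\widehat{\phi}$, dyadically decompose the tail, and sum a geometric series; then substitute $Z=qX^{-1/d+\eta}$ and take $A$ large. The second part is handled identically.

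The one substantive difference is your treatment of the regime $Z<q/X^{1/d}$, which is both unnecessary and slightly flawed. It is unnecessary because the paper simply drops the ``$1+$'' at the outset, using
\[
\Big(1+\tfrac{|x|X^{1/d}}{q}\Big)^{-A}\le\Big(\tfrac{|x|X^{1/d}}{q}\Big)^{-A}
\]
for every nonzero $x$; with this bound your dyadic shell estimate $(q/X^{1/d})^{A}(2^{j}Z)^{-A}$ holds unconditionally, and no case split is needed. Your fallback argument has two gaps: first, $|E_{>Z}|\le|E|$ is not automatic, since $E_{\le Z}$ and $E_{>Z}$ may partially cancel; second, ``the same trivial Fourier bound applied to the complete absolutely convergent lattice sum'' does not give $|E|\ll X^{r/d}$ on the Fourier side (that sum is of size $\asymp X^{r/d}(q/X^{1/d})^{r}$ when $q>X^{1/d}$). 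One can recover $|E|\ll X^{r/d}$ by going back to the pre-Poisson expression, but it is simpler just to drop the $1$ as the paper does.
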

\begin{proof}
By \eqref{eq:rapid_decay}, we have
\beq
E_{> Z}(X, \Psi_\ldmod, \phi)
\ll_{A} X^{r/d} \sum_{\substack{x \\ \exists i \ |x_i| > Z}}
\left( 1 + \frac{|x| X^{1/d}}{\ldmod} \right)^{-A}
\leq
X^{r/d} \left( \frac{\ldmod}{X^{1/d}} \right)^A  \sum_{\substack{x \\ |x| > Z}}
|x|^{-A}.
\eeq
There are $\ll R^{d}$ elements $x$ with $|x| \in [R, 2R]$ for any $R > 0$.
Therefore, assuming 
that $A > d$ this sum
is
\beq
\ll 
X^{r/d} \left( \frac{\ldmod}{X^{1/d}} \right)^A  \sum_{j = 0}^{\infty} (2^j Z)^{-A + d}
\ll
X^{r/d} \left( \frac{\ldmod}{X^{1/d}} \right)^A Z^{-A + d},
\eeq
proving \eqref{eq:error_bound1}. With $Z := \ldmod X^{-1/d + \eta}$
this simplifies to 
$X^{\frac{r}{d} - 1 + (d - A) \eta} \ldmod^{d} \leq X^{\frac{r}{d} - 1 + d + (d - A) \eta}$, and the result follows by choosing
$A = \frac{B + d + \frac{r}{d} - 1}{\eta} + d.$
\end{proof}

In what follows we will choose $Z = Z(\ldmod) := \ldmod X^{-1/d + \eta}$ for a fixed small $\eta > 0$ so as to guarantee 
\eqref{eq:error_bound2}, so that we have $E(X, \Psi_\ldmod, \phi) = O_{B, \eta, \phi}(X^{-B}) + E_{\leq Z}(X, \Psi_\ldmod, \phi)$,
with
\beq\label{eq:poisson_simp}
|E_{\leq Z}(X, \Psi_\ldmod, \phi)| \leq 
X^{r/d} \widehat{\phi}(0) \sum_{\substack{0 \neq x \in V^\ast(\Z) \\ |x_i| \leq Z \ \forall i }} 
|\widehat{\Psi_\ldmod}(x)|,
\eeq
with $\widehat{\phi}(0)$ being a convenient upper bound for $|\widehat{\phi}(t)|$.
We remark that if $\ldmod < X^{1/d - \eta}$ then the sum in \eqref{eq:poisson_simp} is 
empty and $E(X, \Psi_\ldmod, \phi) \ll_B X^{-B}$; i.e., the error is essentially zero. In general, we conclude the following:

\begin{proposition}[Level of distribution $\alpha$, simplified version]\label{prop:gen_ld_simp}
Conclusion \ref{prop:gen_ld} follows if we have, for the same
$\alpha > 0$, some $c < {r/d}$ and $\eta > 0$, every $N < X^{\alpha}$, 
and with $Z := 2 N X^{\eta - 1/d}$, that
\beq\label{eq:ld_simp}
X^{r/d}
\sum_{\ldmod \in [N, 2N]} 
\sum_{\substack{0 \neq x \in V^\ast(\Z) \\ |x_i| \leq Z \ \forall i}}
|\widehat{\Psi_\ldmod}(x)|
\ll X^c.
\eeq
\end{proposition}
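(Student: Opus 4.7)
My plan is to bound $|E(X, \Psi_q, \phi)|$ by splitting it as $|E_{>Z(q)}(X, \Psi_q, \phi)| + |E_{\leq Z(q)}(X, \Psi_q, \phi)|$ for the choice $Z(q) := q X^{\eta - 1/d}$, and then handle each piece separately before summing over squarefree $q < X^{\alpha}$.

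For the tail piece, the preceding lemma with this choice of $Z$ directly gives $|E_{>Z(q)}(X, \Psi_q, \phi)| \ll_{B,\eta} X^{-B}$ for arbitrarily large $B$, so
\begin{equation*}
\sum_{q < X^{\alpha}} |E_{>Z(q)}(X, \Psi_q, \phi)| \ll X^{\alpha - B},
\end{equation*}
which is negligible for any target exponent, on choosing $B$ large enough.

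For the main term, I would apply the pointwise bound \eqref{eq:poisson_simp} and decompose the range of $q < X^{\alpha}$ dyadically into $O(\log X)$ intervals of the form $[N, 2N]$ with $N$ a power of $2$ and $N \leq X^{\alpha}$. On each such block, $Z(q) \leq 2N X^{\eta - 1/d}$, so the summation region in \eqref{eq:poisson_simp} is contained in the region $\{|x_i| \leq 2N X^{\eta - 1/d}\}$ used in the hypothesis \eqref{eq:ld_simp}. Summing the resulting upper bound over $q \in [N, 2N]$ and invoking \eqref{eq:ld_simp} at scale $N$ bounds the contribution of each dyadic block by $\ll X^c$. Adding the $O(\log X)$ blocks yields $\ll X^c \log X$, and since one may absorb the logarithm by replacing $c$ with any strictly larger constant that still satisfies $c < r/d$, Conclusion \ref{prop:gen_ld} follows.

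I expect no serious obstacle here: the argument is essentially pure bookkeeping. The only minor subtlety is that the truncation scale $Z(q)$ varies with $q$, which is precisely why a dyadic decomposition is natural (the scale is essentially constant on each block, up to a harmless factor of $2$). The real content lies in verifying \eqref{eq:ld_simp} for the specific setting of interest; the present proposition simply repackages that hypothesis into the form demanded by the sieve machinery of Section~\ref{sec:explain_ld}.
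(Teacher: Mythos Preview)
Your proposal is correct and follows essentially the same approach as the paper: split off the tail $E_{>Z(q)}$ using the preceding lemma, dyadically decompose the range of $q$ into $O(\log X)$ blocks $[N,2N]$, enlarge $Z(q)$ to $2N X^{\eta-1/d}$ on each block, and absorb the resulting $\log X$ factor into the exponent. The only cosmetic difference is that the paper suppresses the explicit tail bound (having already recorded $E = O(X^{-B}) + E_{\leq Z}$ before \eqref{eq:poisson_simp}) and mentions absorbing the constant $\widehat{\phi}(0)$, which you have implicitly folded into your $\ll$.
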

\medskip
\begin{proof}
We divide the sum in \eqref{eq:gen_ld}
into $\ll \log X$ dyadic intervals $[N, 2N]$ and apply \eqref{eq:poisson_simp} to each $E(X, \Psi_\ldmod, \phi)$,
for each $q$ expanding the condition $|x_i| \leq Z(q)$ to $|x_i| \leq Z(N) = 2 N X^{-1/d + \eta}$. The term
$\widehat{\phi}(0)$ may be subsumed (for fixed $\phi$) into constants implied by the notation $\ll$ and $O(-)$,
and Conclusion \ref{prop:gen_ld} follows (with any $c$ strictly larger than that in \eqref{eq:ld_simp}, so as to incorporate
a contributions of $O(\log X)$ from the number of intervals).
\end{proof}

This statement may initially look more complicated than Conclusion \ref{prop:gen_ld}, but it is simpler in that it 
lends 
itself naturally to geometric proofs. Moreover the sums over $\ldmod$ and $x$ are independent
and can be interchanged.
\medskip

{\itshape The $L_1$ norm heuristic.} In the introduction, we said that
`$L_1$ norm bounds for Fourier transforms over finite fields should lead to level of distribution statements for arithmetic objects.'
Such a heuristic arises from \eqref{eq:ld_simp} by assuming that $|\widehat{\Psi_q}(x)|$ has the same average value
in the box defined by $|x_i| \leq Z$ as it does in all of $V^*(\Z/q\Z)$. Such a statement cannot be proved in general,
and indeed it is not always true: for example, in $\Z^2 \otimes \Sym_2 \Z^3$ there are disproportionately
many 
doubled forms $x = (x_1, x_1) \in V^*(\Z) \cap [-Z, Z]^{12}$ near the origin. That said, this heuristic is the motivation behind
our geometric sieve method, and it also provides a target for what we may hope to prove.

\subsection{Reformulation in terms of $V(\Z)$}\label{sec:reform}
In practice it will be convenient to describe the Fourier transforms $\widehat{\Psi_q}(x)$ in terms
of $V(\Z)$ instead of $V^*(\Z)$. To do this, assume we have a linear map $\rho : V^* \rightarrow V$,
defined by equations over $\Z$, satisfying the following properties\footnote{Formally we may define $\rho$ as a
morphism of schemes over $\Z$ (which is an isomorphism over $\Z[1/m]$); what we need is that
$\rho$ defines maps $V^*(\Z) \rightarrow V(\Z)$, $V^*(R) \rightarrow V(R)$ for each ring $R$ containing $\Z$,
and $V^*(\Z/q\Z) \rightarrow V(\Z/q\Z)$ for each quotient $\Z/q\Z$ of $\Z$, 
all defined
by the same equations and hence compatible with the appropriate ring homomorphisms.} for some integer $m$:
(1) We have
$m V(\Z) \subseteq \rho(V^*(\Z)) \subseteq V(\Z)$; (2) $\rho$ defines an isomorphism
$V^*(\Z/q\Z) \rightarrow V(\Z/q\Z)$ for all integers $q$ coprime to $m$; (3) for each $x \in V^*(\Z)$, the coefficients
of $\rho(x) \in V(\Z)$ are bounded above by $m$ times those of $x$. Note that (2) is implied by (1), since
$m V(\Z/q\Z) \subseteq \rho(V^*(\Z/q\Z)) \subseteq V(\Z/q\Z)$ for each $q$.

We then define $\widehat{\Psi_q}$ on $V(\Z/q\Z)$ by writing
$\widehat{\Psi_q}(x) = \widehat{\Psi_q}(\rho^{-1}(x))$, and we
lift this definition of $\widehat{\Psi_q}$ to all of $V(\Z)$.
Finally, by abuse of notation we write
$\widehat{\Psi_q}(x) = \widehat{\Psi_{\frac{q}{(q, m)}}}(x)$
for an arbitrary squarefree $q$, 
so that we have defined $\widehat{\Psi_q}(x)$
for all squarefree $q$ and all $x \in V(\Z)$.

The following is then immediate:

\begin{proposition}[Level of distribution $\alpha$, simplified version in terms of $V(\Z)$]\label{prop:gen_ld_simp2}
Given the constructions above, Conclusion \ref{prop:gen_ld} 
follows if we have, for the same
$\alpha > 0$, some $c < {r/d}$ and $\eta > 0$, every $N < X^{\alpha}$, 
and with $Z := N X^{\eta - 1/d}$, that
\beq\label{eq:ld_simp2}
X^{r/d}
\sum_{\ldmod \in [N, 2N]} 
\sum_{\substack{0 \neq x \in V(\Z) \\ |x_i| \leq Z \ \forall i}}
|\widehat{\Psi_\ldmod}(x)|
\ll X^c.
\eeq
\end{proposition}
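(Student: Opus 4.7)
The plan is to reduce Proposition \ref{prop:gen_ld_simp2} to the already-established Proposition \ref{prop:gen_ld_simp} by a change of variables through $\rho$. Everything on the left-hand side of \eqref{eq:ld_simp2} is expressed in terms of the lift of $\widehat{\Psi_q}$ to $V(\Z)$, and the content of the statement is that this lift captures the same information as the original sum over $V^*(\Z)$ in \eqref{eq:ld_simp}, up to bounded constants depending on $m$.

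First I would observe that $\rho : V^*(\R) \to V(\R)$ is an isomorphism of $\R$-vector spaces: the inclusion $mV(\Z) \subseteq \rho(V^*(\Z))$ forces $\rho$ to be surjective on $V(\Q)$, and since $V^*$ and $V$ have equal dimension this makes $\rho$ injective on $V^*(\Z)$. Consequently the map $x \mapsto \rho(x)$ identifies $V^*(\Z)$ with a sublattice of $V(\Z)$ of index dividing $m^r$. By property (3) of $\rho$, an element $x \in V^*(\Z)$ with $|x_i| \leq Z$ for all $i$ is sent to some $y = \rho(x) \in V(\Z)$ satisfying $|y_i| \leq mZ$. Hence
\[
\sum_{\substack{0 \neq x \in V^*(\Z) \\ |x_i| \leq Z \ \forall i}} |\widehat{\Psi_q}(x)|
\;\leq\;
\sum_{\substack{0 \neq y \in V(\Z) \\ |y_i| \leq mZ \ \forall i}} |\widehat{\Psi_q}(y)|,
\]
where on the right we use the definition of $\widehat{\Psi_q}$ on $V(\Z)$ given in the paragraph preceding the proposition; by construction this agrees with $|\widehat{\Psi_q}(\rho^{-1}(y))|$ on the image lattice, and equals zero elsewhere in the image sublattice only if $\Psi_q$ factors through a strictly smaller modulus, which does not affect the inequality.

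Next I would deal with the factor $(q,m)$. Writing $q = (q,m) \cdot q'$ with $q' = q/(q,m)$ squarefree and coprime to $m$, we have $\widehat{\Psi_q}(y) = \widehat{\Psi_{q'}}(y)$ by definition. Since $(q,m) \leq m$, the correspondence $q \leftrightarrow q'$ is at most $2^{\omega(m)}$-to-one, and if $q \in [N,2N]$ then $q' \in [N/m, 2N]$. Substituting all of this into the right-hand side of \eqref{eq:ld_simp2}, the hypothesis yields an identical bound (up to a factor of $m$ in the box-size parameter and an $O_m(1)$ loss from the fibers $q \mapsto q'$) for the corresponding sum appearing in \eqref{eq:ld_simp}, with $Z$ replaced by $mZ$. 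Absorbing the constant $m$ either into a slightly smaller $\eta$ or into the implicit constants, we recover \eqref{eq:ld_simp} and invoke Proposition \ref{prop:gen_ld_simp} to deduce Conclusion \ref{prop:gen_ld}.

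The argument is almost entirely bookkeeping, so I do not anticipate a genuine obstacle; the only point requiring a little care is checking that the lifted definition of $\widehat{\Psi_q}$ on $V(\Z)$ is compatible with the original definition on $V^*(\Z)$ when one passes between the index-$(q,m)$ subfactor and the full modulus $q$. Once that is verified, the box-size inflation by $m$ and the multiplicity $2^{\omega(m)}$ are harmless and the reduction is immediate.
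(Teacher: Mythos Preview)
Your proposal is correct and follows the same route as the paper: use property (3) of $\rho$ to embed the $V^*(\Z)$-sum of Proposition~\ref{prop:gen_ld_simp} into the $V(\Z)$-sum with box inflated by a factor of $m$, then absorb $m$ (and the dyadic factor $2$) into $\eta$ or the implied constants. The paper's proof is literally a one-sentence remark to this effect, so you have simply written out the details. Your treatment of $(q,m)$ is more elaborate than necessary: since $|\widehat{\Psi_p}(x)|\le 1$ for $p\mid m$ and $\widehat{\Psi_q}$ is multiplicative, one has $|\widehat{\Psi_q}(x)|_{V^*}\le |\widehat{\Psi_{q/(q,m)}}(\rho(x))|=|\widehat{\Psi_q}(\rho(x))|_{V}$ directly, with no need to reindex the $q$-sum.
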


In fact this conclusion is immediate only with $Z = 2m N X^{\eta - 1/d}$, but we observe that we may divide all
our previous choices of $Z$ by $2m$, with identical results holding at each step; alternatively we may take $\eta$ larger than 
that of Proposition \ref{prop:gen_ld_simp}. The implied constant in \eqref{eq:ld_simp2} is independent of $N$ and $X$
but may depend on the other variables.

\medskip

For each of the two specific representations
$(G,V)$ we treat in this paper,
as well as many other cases of interest,
such a $\rho$ is naturally induced by a non-degenerate symmetric bilinear form
$[-,-]$ on $V$, defined over $\Z[1/m]$, for which 
$[gx, g^{\iota} y] = [x, y]$ identically for an involution $\iota$ of $G$. Whenever $(q, m) = 1$, this implies that
$\rho : V^*(\Z/q\Z) \rightarrow V(\Z/q\Z)$ defines an isomorphism of $G(\Z/q\Z)$-modules.
These facts are important to our evaluation of the Fourier transforms $\widehat{\Psi_q}$ 
in \cite{TT_orbital}, which we describe as functions on $V(\F_q)$ rather than on $V^*(\F_q)$.

We refer to \cite{TT_orbital}, especially Sections 2 and A, for further details and explicit constructions.
For example, let $V$ be the space of binary cubic forms
with $G=\gl_2$.
Then $V(\Z)$ is the lattice of all integral binary cubic forms and
$m=3$.
The bilinear form on $V$ is defined by
\[
[x,x']=aa'+\frac13bb'+\frac13cc'+dd'
\]
for
$x=au^3+bu^2v+cuv^2+dv^3$
and
$x'=a'u^3+b'u^2v+c'uv^2+d'v^3$; the involution $\iota$ is defined by $g \mapsto g^{-T}$;
and
$\rho$ is the inverse of the map $V\ni x\mapsto [\cdot,x]\in V^\ast$, which is an isomorphism
over $\Z[1/3]$. Since $V^\ast(\Z)=\{\phi\in V^\ast(\Q)\mid \phi(V(\Z))\subset\Z\}$,
under the identification $V(\Q)=V^\ast(\Q)$,
$V^\ast(\Z)$ is the lattice of integral binary cubic forms
whose two middle coefficients are multiples of $3$,
and thus $V^\ast(\Z) \subset V(\Z)$.

For the space $V$ of pairs of ternary quadratic forms, 
$V(\Z)$ is the lattice of all pairs of integral quadratic forms,
$V^\ast(\Z)\subset V(\Z)$
is the lattice of pairs of integral quadratic forms
whose off-diagonal coefficients are multiples of $2$, and $m = 2$.

\section{Proof of Theorem \ref{thm:ap3}}\label{sec:ap3}

We now prove Theorem \ref{thm:ap3} by obtaining a level
of distribution of $\frac{1}{2} - \epsilon$ for a smoothed subset of integral orbits of binary cubic forms (where the level of distribution is again taken
with respect to the
property of the discriminant being divisible by $q$). Although we could appeal to the geometric sieve method of Section \ref{sec:ld}, we instead
give an easier proof whose idea is roughly equivalent to a special case of this method.

\medskip
In this section $V(\Z) := \Sym_3 \Z^2$ is the lattice of integral binary cubic forms, $r = d = 4$, $Z := NX^{-1/4 + \eta}$, $\Psi_q$ is the characteristic function of $x \in V(\Z)$ with
$q \mid \Disc(x)$, and we argue that we can prove the bound \eqref{eq:ld_simp2} for any $\alpha < \frac{1}{2}$, i.e. that
\begin{equation}\label{ld:cubic}
\sum_{\ldmod \in [N, 2N]} 
\sum_{\substack{0 \neq x \in V(\Z) \\ |x_i| \leq Z \ \forall i}}
|\widehat{\Psi_\ldmod}(x)|
\ll X^{-1 + c}
\end{equation}
for each $N < X^\alpha$ for some $c = c(\alpha) < 1$.
The Fourier transform $\widehat{\Psi_q}$ is multiplicative in $q$, and satisfies
\begin{equation}\label{eq:Psi_fourier_cubic}
\widehat{\Psi_p}(x)=
\begin{cases}
p^{-1} + p^{-2} - p^{-3}
	& \text{if } x \in pV(\Z),\\
p^{-2} - p^{-3}
	& \text{if } x \not \in pV(\Z) \text{ but } p \mid \Disc(x),\\
	- p^{-3} & \text{if } p \nmid \Disc(x).
\end{cases}
\end{equation}

For each positive divisor $q_0$ of $q$ and $x \in q_0 V(\Z)$, we have
\[
\widehat{\Psi_q}(x) = \widehat{\Psi_{q_0}}(x) \cdot \widehat{\Psi_{q / q_0}}(x)
= \widehat{\Psi_{q_0}}(x) \cdot \widehat{\Psi_{q / q_0}}(x / q_0).
\]
Therefore
the summation of \eqref{ld:cubic} is equal to
\begin{equation}\label{ld:cubic2}
\sum_{q_0 \leq Z} \Big( \prod_{p \mid q_0} (p^{-1} + p^{-2} - p^{-3}) \Big)
\sum_{\substack{\frac{N}{q_0} \leq q_1 \leq \frac{2N}{q_0} \\ (q_0, q_1) = 1}} 
\sump_{\substack{x \\ |x_i| \leq \frac{Z}{q_0} \ \forall i}}
|\widehat{\Psi_{q_1}}(x)|,
\end{equation}
where the inner sum is over those $x \in V(\Z)$ which are not in 
$p V(\Z)$ for any prime divisor $p$ of $q_0 q_1$.

We split the sum of \eqref{ld:cubic2} into two pieces: a sum over those $x$ for
which $\Disc(x) = 0$, and a sum over those $x$ for which $\Disc(x) \neq 0$.

\medskip
{\itshape Those $x$ with $\Disc(x) = 0$.} The number of such $x$ with all coordinates bounded by $Y$,
is $O(Y^2)$ for any $Y$. To see this, note that any such $x$ can be written
as $(ax + by)^2(cx + dy)$ for some $a, b, c, d \in \Z$. The number of possibilities with
$a = 0$ is $O(Y^2)$, as this forces the $x^3$ and $x^2 y$ coefficients to both be zero.
Similarly there are $O(Y^2)$ possibilities with $b = 0$. We are therefore left with the number
of integer quadruples $(a, b, c, d)$ with $a b \neq 0$, $|a^2 c| \leq Y$, and $|b^2 d| \leq Y$,
which is the square of the number of integer pairs $(a, c)$ with $a \neq 0$, $|a^2 c| \leq Y$.
This latter quantity is easily seen to be $O(Y)$, as needed.

The inner sum is therefore over $O(Z/q_0)^2$ elements, and 
for each $x$ we have $|\widehat{\Psi_{q_1}}(x)| \leq {q_1}^{-2}$. Therefore,
this portion of the sum in \eqref{ld:cubic2} is
\begin{equation}\label{ld:cubic2a}
\ll_{\epsilon} 
\sum_{q_0 \leq Z} q_0^{-1 + \epsilon} \cdot
\frac{N}{q_0} \cdot \left( \frac{Z}{q_0} \right)^2 \cdot \left( \frac{N}{q_0} \right)^{-2}
\ll_{\epsilon} N^{-1} Z^2 \sum_{q_0 \leq Z} q_0^{-2 + \epsilon}
\ll X^{2\eta} N X^{-1/2},
\end{equation}
which satisfies the bound \eqref{ld:cubic}.

\medskip
{\itshape Those $x$ with $\Disc(x) \neq 0$.}
The contribution of these is bounded above by
\begin{align*}\label{ld:cubic2b}
& \sum_{q_0 \leq Z} \Big( \prod_{p \mid q_0} (p^{-1} + p^{-2} - p^{-3}) \Big)
\sum_{\substack{\frac{N}{q_0} \leq q_1 \leq \frac{2N}{q_0} \\ (q_0, q_1) = 1}} q_1^{-3}
\sump_{\substack{x \\ |x_i| \leq \frac{Z}{q_0} \ \forall i}}
\gcd(\Disc(x), q_1)
\\
\leq
& \sum_{q_0 \leq Z} \Big( \prod_{p \mid q_0} (p^{-1} + p^{-2} - p^{-3}) \Big)
\left( \frac{N}{q_0} \right)^{-3}
\sum_{\substack{x \\ |x_i| \leq Z/q_0 \ \forall i \\ \Disc(x) \neq 0}}
\sum_{\substack{\frac{N}{q_0} \leq q_1 \leq \frac{2N}{q_0}}} 
\gcd(\Disc(x), q_1).
\end{align*}
Now, in general, whenever $m \neq 0$ we have
\[
\sum_{n \in [N, 2N]} \gcd(m, n)
\leq
\sum_{\substack{ f \mid m \\ f \leq 2N}} f 
\sum_{\substack{ n \in [N, 2N] \\ f \mid n}} 1
\leq
\sum_{\substack{ f \mid m \\ f \leq 2N}} f 
\left( \frac{N}{f} + 1 \right)
\ll N m^{\epsilon}.
\]
Therefore, using that the discriminant of any $x$ in the sum is $\ll N^4$, we see that the previous quantity
is
\begin{align*}\label{ld:cubic2c}
\ll_{\epsilon}
& \sum_{q_0 \leq Z} \Big( \prod_{p \mid q_0} (p^{-1} + p^{-2} - p^{-3}) \Big)
\left( \frac{N}{q_0} \right)^{-3}
\cdot
\left( \frac{Z}{q_0} \right)^4
\frac{N}{q_0} \cdot N^{\epsilon}
\\
\ll_{\epsilon} &
N^{\epsilon} Z^4 N^{-2}
\sum_{q_0 \leq Z} q_0^{-3 + \epsilon} 
\\
\ll_{\epsilon} &
N^{\epsilon} N^2 X^{-1 + 4 \eta},
\end{align*}
again satisfying \eqref{ld:cubic}.
\medskip

Applying the weighted sieve of Theorem \ref{thm:ls}, and following the beginning of the proof of Proposition \ref{prop:ld_gives_main},
we obtain $\gg \frac{X}{\log X}$ elements $x \in V(\Z)$ whose discriminants
have
at most three prime factors. Of these, at most $O_\epsilon(X^{3/4 + \epsilon})$ can be reducible. (For a simple proof see \cite[Lemma 21]{BST}; only 
the second paragraph of the proof there is relevant, as we are counting points in a box of side length $\ll X^{1/4}$.) As the weighted sieve produces $x$ with
each prime factor
$> X^{\alpha/4}$, the number of $x \in V(\Z)$ with any repeated prime factor is
$\ll \sum_{p > X^{\alpha/4}} \frac{X}{p^2} \ll X^{1 - \alpha/4}$ by \cite[Lemma 3.4]{BBP}. Accordingly we produce $\gg \frac{X}{\log X}$ irreducible elements $x \in V(\Z)$ with
squarefree (and hence fundamental) discriminants, which must therefore correspond to (distinct) maximal cubic orders and hence to  cubic fields.

\begin{remark} 
In place of our estimate of $O(Y^2)$ for reducible elements $x$ in boxes of side length $O(Y)$,
the method of Section \ref{sec:ld} would implicitly incorporate 
a bound of $O(Y^3)$, as $3$ is the dimension of the variety $\Disc(x) = 0$. This proof illustrates that
counting elements more directly may yield improvements in the end results.
\end{remark}

\section{Closed subschemes containing singular orbits}
\label{subsec:2sym23-subschemes}

Let $V$ be the space of pairs of ternary quadratic forms, together with its
action of $G = \GL_2 \times \GL_3$. Recall from \cite[Proposition 21]{TT_orbital}
that, for each $p \neq 2$ there are $20$ orbits for the action of $G(\F_p)$ on $V(\F_p)$.
We gave twenty `orbit descriptions' $\co$ which were essentially uniform in $p$, and for each $p$
we write $\co(p)$ for the associated orbit over $\F_p$. 

\begin{proposition}\label{prop:same_dim}
For each of the orbit descriptions $\co$ described above there exists a closed subscheme
$\sillyX \subset V$, defined over $\Z$, such that $\co(p) \subseteq \sillyX(\F_p)$ for each prime $p \neq 2$, and of the same
`dimension' as 
$\co$ in the sense that $\# \co(p) \asymp \# \sillyX(\F_p)$.
\end{proposition}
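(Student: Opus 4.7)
The plan is to construct, for each of the twenty orbit descriptions $\mathcal{O}$ listed in \eqref{eq:Psi_fourier_ternary}, a closed subscheme $\mathfrak{X}\subseteq V$ that is essentially the Zariski closure of the orbit, defined by explicit polynomial equations over $\Z$. Since the orbit descriptions in \cite{TT_orbital} are uniform in $p$, each is characterized by algebro-geometric conditions---factorization type of the resolvent cubic, rank conditions on $A$ and $B$, vanishing of various minors---all of which can be expressed as the vanishing of polynomials with integer coefficients.

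First I would organize the orbits via the $G$-equivariant resolvent map $\pi\colon V \to \Sym_3(\A^2)$ sending $(A,B)\mapsto 4\det(Ax+By)$, which is a polynomial map defined over $\Z$. The orbits of $\pi(A,B)$ as a binary cubic form are cut out by simple polynomial conditions: $\Disc(\pi(A,B)) = 0$ detects a repeated root, the vanishing of suitable explicit covariants detects a triple root, and $\pi(A,B)\equiv 0$ detects the zero cubic. Finer distinctions among the orbits in a given fiber of $\pi$ are controlled by rank conditions (vanishing of minors) on $A$, $B$, or on pencils $A+tB$ evaluated at the roots of $\pi(A,B)$---again polynomial conditions with integer coefficients. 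For each orbit description $\mathcal{O}$, take $\mathfrak{X}$ to be the closed subscheme cut out by the corresponding system of equations, so that by construction $\mathcal{O}(p)\subseteq \mathfrak{X}(\F_p)$ for every $p\neq 2$. For the generic orbit $\co_0$ one simply takes $\mathfrak{X}=V$; at the other extreme, the orbits corresponding to $\cO_{1111}$ and below sit inside the discriminant hypersurface together with further degeneracy conditions.

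For the dimension count, each $\mathfrak{X}$ is a closed subscheme of $V_\Z$ of the same geometric dimension $i(\mathcal{O})$ as $\mathcal{O}$: the lower bound $\dim \mathfrak{X}\geq i(\mathcal{O})$ is immediate from the inclusion, and the upper bound follows by checking that the chosen generators of the defining ideal cut out exactly the right dimension. An application of Lang--Weil (or direct counting for our explicit $\mathfrak{X}$) then gives $\#\mathfrak{X}(\F_p)\asymp p^{\dim \mathfrak{X}}=p^{i(\mathcal{O})}\asymp \#\mathcal{O}(p)$, as required.

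The main obstacle is bookkeeping: producing each of the twenty schemes explicitly, and verifying---particularly for the deepest (most singular) orbits such as $\cO_{1^4}$, $\cO_4$, or $\cO_{22}$---that the naive choice of defining equations does not cut out a scheme of too large a dimension. In practice this is a routine case-by-case check guided by the orbit closure poset, which is finite and well understood from \cite{TT_orbital}: at each stratum one rules out extraneous higher-dimensional components by adjoining additional minor or covariant conditions until the geometric dimension drops to $i(\mathcal{O})$. No single orbit presents a conceptual difficulty beyond this combinatorial care.
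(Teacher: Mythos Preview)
Your approach is correct in outline but genuinely different from the paper's. You propose to write down, for each orbit description, an explicit system of integer polynomial equations (resolvent conditions, rank/minor conditions, covariant vanishing) and then verify by hand that the resulting scheme has the right dimension. The paper instead argues abstractly: it observes that the geometric $G(\overline{K})$-orbits have representatives $x_\sigma \in V(\Z)$ independent of $K$, takes the Zariski closure $\mathfrak{X}_\sigma$ of $G(\overline{\Q})x_\sigma$ (which by \cite{borel} is a closed subvariety with the orbit as a dense open subset), spreads this out to an integral model, and invokes Lang--Weil. The dimension is then automatic, and no explicit equations are ever written down.

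The trade-off is this. The paper's argument is cleaner and uniform: it applies to any $(G,V)$ with finitely many geometric orbits having $\Z$-representatives and with each $\F_p$-orbit commensurate with its geometric orbit, and it outsources the ``orbit closure is closed of the right dimension'' step to standard results on algebraic group actions. Your approach is more concrete and avoids citing \cite{borel}, but the sentence ``the upper bound follows by checking that the chosen generators \dots\ cut out exactly the right dimension'' is where all the real content lives, and for twenty (or twelve geometric) orbits this is a nontrivial amount of verification that you have not actually carried out. In particular, for the deeper strata the natural first guess at equations can easily cut out a scheme with spurious higher-dimensional components, and ruling these out is precisely the work you defer as ``routine bookkeeping.'' Either route leads to the proposition, but the paper's is shorter and more robust.
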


We will prove this statement, with `$p \neq 2$' replaced with `$p \not \in S$ for some finite set $S$',
for any finite dimensional $(G, V)$ satisfying the following two properties:
\begin{itemize}
\item
There exist finitely many elements $x_\sigma\in V(\Z)$
such that
for any algebraically closed field $K$ with ${\rm char}(K)\not \in S$,
the images of the $x_\sigma$ in $V(K)$ via the
canonical map $V(\Z)\rightarrow V(K)$
form a set of complete representatives
for $G(K)\backslash V(K)$.
\item
There exists a constant $c > 0$ such that for each $p \not \in S$ and $G(\F_p)$-orbit $\co \subseteq V(\F_p)$
we have $\# \co > c \# \widetilde{\co}$, with $\widetilde{\co} := G(\overline{\F_p}) \co \cap V(\F_p)$.
\end{itemize}
These properties hold for all of the $(G, V)$ studied in \cite{TT_orbital}, as we explain now in the case of 
pairs of ternary quadratic forms.
We group the $20$ orbit descriptions of \cite[Proposition 21]{TT_orbital} as follows:
\begin{multline*}
\{ \co_0 \},
\{ \co_{D1^2} \},
\{ \co_{D11}, \co_{D2} \},
\{ \co_{D\rm ns} \},
\{ \co_{C\rm s} \},
\{ \co_{C\rm ns} \},
\{ \co_{B11}, \co_{B2} \}, \\ 
\{ \co_{1^4} \},
\{ \co_{1^3 1} \},
 \{ \co_{1^2 1^2}, \co_{2^2} \},
 \{ \co_{1^2 11}, \co_{1^2 2} \},
 \{ \co_{1111},  \co_{112,}  \co_{22},
\co_{13}, \co_4 \}.
\end{multline*}

Within each of these twelve sets, the orbital representative of the {\itshape first-listed} $\co$ is described
in \cite[Proposition 21]{TT_orbital} as the reduction $\pmod p$ of a fixed element $x_\sigma \in V(\Z)$, and
when $K$ is algebraically closed with $\chr(K) \neq 2$, the proof
in \cite[Section 7.1]{TT_orbital} establishes that the images of these $x_\sigma$ in $V(K)$ are a set of representatives for 
$G(K) \backslash V(K)$. Moreover, for each $p \not \in S$ and $G(\F_p)$-orbit $\co \subseteq V(\F_p)$, the associated
$\widetilde{\co}$ is the union of the $\co$ in the grouping described above, and the second property above may
be deduced from the point counts in \cite[Proposition 21]{TT_orbital}.

To conclude Proposition \ref{prop:same_dim} from these two properties, write
$Y_\sigma := G(\overline{\Q}) x_\sigma \subseteq V(\overline{\Q})$ for each $x_\sigma$. By 
\cite[Propositions I.1.8 and II.6.7]{borel} we may write each $Y_\sigma$ in the form
$Y_\sigma = \sillyX_\sigma - \cup_j W_{\sigma, j}$ where the $\sillyX_\sigma$ and $W_{\sigma, j}$ are (finitely many)
closed varieties, defined over $\Q$, and with $\dim(W_{\sigma, j}) < \dim(\sillyX_\sigma)$ for all $\sigma$ and $j$.
(Each $\sillyX_\sigma$ is the closure of $Y_\sigma$, and the $W_{\sigma, j}$ are defined by the closures of other $G(\overline{\Q})$-orbits, of which there are finitely many,
and since each of the $x_\sigma$ is defined over $\Q$ their orbits are as well.)

We choose (arbitrary) integral models for the
$\sillyX_\sigma$ so as to regard them as closed subschemes of $V$.
For all but finitely many $p$,
these equations reduce $\pmod p$ and define varieties of the same dimension over $\F_p$, and we conclude
by Lang-Weil \cite{LW} that $\#\sillyX_\sigma(\F_p) \asymp \# Y_\sigma(\F_p) \asymp p^{\dim(\sillyX_\sigma)}.$ The second bullet point above then gives
the desired conclusion.

\section{A version of the geometric sieve}\label{sec:geom}

The {\itshape Ekedahl-Bhargava geometric sieve}, in the form of \cite[Theorem 3.3]{B_geosieve}, asserts the following.
Let $B$ be a compact region in $\R^r$, let $\sillyX$ be a closed subscheme of $\A_\Z^r$ of codimension
$a \geq 1$, and let $\lambda$ and $P$ be positive real numbers. Then, we have
\begin{equation}\label{eq:b_geo}
\# \{ x \in \lambda B \cap \Z^r \ | \ x \pmod p \in \sillyX(\F_p) 
\textnormal{ for some prime } p > P \}
= O_{B, \sillyX} \Bigg( \frac{\lambda^r}{P^{a - 1} \log P} + \lambda^{r - a + 1} \Bigg).
\end{equation}

We introduce a variation with two modifications. Firstly, we count each $x$ with multiplicity, given by
the number of pairs $(x, p)$ for which $x \pmod p \in \sillyX(\F_p)$ and $p \in [P, 2P]$.
Secondly, we introduce an `arithmetic progression' condition $x - x_0 \in m V(\Z)$, allowing for additional flexibility
in applications (as we will see in Section \ref{sec:ld}.) 

We refer also to \cite{BCT} where the same generalization is presented concurrently; the application there
replaces {\itshape primes} in \eqref{eq:b_geo} with squarefree integers, which amounts roughly to a simpler version of the argument in Section \ref{sec:ld} here.

\begin{theorem}\label{lem:geosieve}
Let $B$, $\sillyX$, $a$, and $\lambda$ be as in the statement of \eqref{eq:b_geo}, let $m$ be a positive integer, let $x_0 \in V(\Z)$,
and let $P > \lambda/m > 1$ be any real number.
Then, we have
\begin{equation}\label{eq:beo}
\# \{ (x, p) \ | \ x \in \lambda B \cap (x_0 + m\Z^r), \ \ \textnormal{$p$ is a prime in $[P, 2P]$}, \  p \nmid m, \ 
x \pmod p \in \sillyX(\F_p) \}
\ll_{B, \sillyX, \epsilon} \Big( \frac{\lambda}{m} \Big)^{r - a} P \lambda^{\epsilon}.
\end{equation}

\end{theorem}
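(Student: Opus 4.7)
The plan is to fix the prime $p$, bound the inner count
\[
N(p) := \#\{x \in \lambda B \cap (x_0 + m\Z^r) : x \bmod p \in \sillyX(\F_p)\},
\]
and then sum over primes $p \in [P, 2P]$ coprime to $m$. The crucial quantitative consequence of the hypothesis $P > \lambda/m$ is that $mp > \lambda$ for every $p$ in the range, so by a standard lattice-point argument each residue class modulo $mp$ contains at most $O_B(1)$ elements of $\lambda B$. Combined with CRT (using $\gcd(m,p)=1$), this controls lifts from residues modulo $p$ back to $\lambda B \cap (x_0 + m\Z^r)$.

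First, by additivity of the count across irreducible components, I would reduce to the case where $\sillyX$ is irreducible of pure dimension $r-a$. The key geometric input is then a noether normalization: after possibly an integral linear change of coordinates on $\A^r$, one can assume that the projection $\pi : \A^r \to \A^{r-a}$ onto the first $r-a$ coordinates, restricted to $\sillyX$, is a finite morphism of some degree $D = D(\sillyX)$. For all but finitely many primes $p$ the fibers of $\pi$ over $\sillyX(\F_p)$ all have size at most $D$.

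With this setup in hand, I would decompose $N(p)$ according to the value of $\pi(x) \bmod p \in \F_p^{r-a}$. The projection of $\lambda B \cap (x_0 + m\Z^r)$ onto the first $r-a$ coordinates lies in $\lambda\pi(B) \cap (\pi(x_0) + m\Z^{r-a})$ and so has cardinality $O_B((\lambda/m)^{r-a})$; its image modulo $p$ is no larger. For each admissible projection value $v$ the fiber of $\sillyX(\F_p)$ above $v$ has at most $D$ points, and for each such fiber point $s \in \F_p^r$ there are $O_B(1)$ elements of $\lambda B \cap (x_0 + m\Z^r)$ congruent to $s$ modulo $p$, by the first paragraph. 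Multiplying these three factors gives $N(p) \ll_{B,\sillyX} (\lambda/m)^{r-a}$ uniformly in $p \in [P, 2P]$ outside a finite exceptional set. Summing over the $O(P/\log P)$ primes in $[P, 2P]$ yields the target bound $\ll_{B,\sillyX} (\lambda/m)^{r-a} P$; the harmless $\lambda^\epsilon$ cushion absorbs mild losses from non-generic fibers, from handling irreducible components one at a time, and from the finite exceptional set of primes.

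The main technical obstacle is carrying out the noether normalization over $\Z$ rather than over $\Q$ or $\C$: one must verify that a single integral linear change of coordinates works uniformly for cofinitely many reductions modulo $p$, and one must control the contribution of the exceptional primes and of any fibers whose dimension jumps over $\F_p$. These degenerate contributions can either be absorbed into the $B$- and $\sillyX$-dependent implied constants (for the finitely many bad primes) or shown to be of lower order (for non-generic fibers, whose total contribution is dominated by the main term).
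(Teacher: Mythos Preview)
Your argument is correct but organized differently from the paper's. The paper follows Bhargava's original proof of \cite[Theorem~3.3]{B_geosieve} directly, splitting into the cases $x \in \sillyX(\Z)$ and $x \notin \sillyX(\Z)$: the first is handled by the analogue of \cite[Lemma~3.1]{B_geosieve} (lattice points on $\sillyX(\Z)$ in a box), and the second by fixing $x$ and bounding the number of primes $p > \lambda/m$ dividing a nonzero value $f_i(x)$; this is where the factor $\lambda^{\epsilon}$ genuinely enters. You instead fix the prime $p$ first and bound $N(p)$ via a Noether-normalization projection, using the hypothesis $mp > \lambda$ to force each residue class modulo $mp$ to contain $O_B(1)$ points of $\lambda B$. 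Both routes rest on the same underlying geometric input (a finite projection onto $r-a$ coordinates, which is also what drives Bhargava's Lemma~3.1), but your ordering is more direct for the pair-counting formulation and in fact does not need the $\lambda^{\epsilon}$ cushion at all: you get $N(p) \ll_{B,\sillyX} (\lambda/m)^{r-a}$ uniformly and hence $\ll (\lambda/m)^{r-a} P/\log P$ after summing. The paper's route has the advantage of transplanting Bhargava's argument essentially verbatim, which also covers the ``some prime $p > P$'' formulation in \eqref{eq:b_geo} where your fix-$p$-first approach would not immediately apply.
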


\begin{proof} This closely follows the proof of Theorems 3.1 and 3.3 of Bhargava \cite{B_geosieve}. In \cite{B_geosieve} (with $m = 1$),
the quantity $\lambda$ appears only as an upper bound for the number of lattice points in $rB$ lying on a line defined by fixing all
but one of the coordinates. Therefore, with $m > 1$ we can replace $\lambda$ with $\frac{\lambda}{m}$ at each occurrence.

The analogue of \cite[Lemma 3.1]{B_geosieve}, proved identically, thus reads that
\begin{equation}\label{eq:vz}
\# \{ x \in \lambda B \cap (x_0 + m\Z^r) \cap \sillyX(\Z) \}
\ll_{B, \sillyX} \Big( \frac{\lambda}{m} \Big)^{r - a},
\end{equation}
and we obtain the bound of \eqref{eq:beo} for those $x \in \sillyX(\Z)$.

For those $x \not \in \sillyX(\Z)$, it suffices to prove that
\begin{equation}\label{eq:beo2}
\# \left\{ (x, p) \ \Bigg| 
\begin{array} {l}
x \in \lambda B \cap (x_0 + m\Z^r), \ x \not \in \sillyX(\Z), \\
p > \frac{\lambda}{m}, \  p \nmid m, \ 
x \pmod p \in \sillyX(\F_p) 
\end{array}
\right\}
\ll_{B, \sillyX, \epsilon} \Big( \frac{\lambda}{m} \Big)^{r - a + 1} \cdot \lambda^{\epsilon},
\end{equation}
the exact analogue of \cite[(17)]{B_geosieve}. 
This follows \cite{B_geosieve} exactly. The condition $p \nmid m$ is
needed in the last paragraph of \cite[Theorem 3.3]{B_geosieve}, to conclude that if $f_k(x)$ is a polynomial in one variable
with $f_k \not \equiv 0 \pmod p$, then it has $O_{\deg(f_k)}(1)$ roots $x$ in an interval of length $O(\lambda)$, and with
$x \equiv x_0 \pmod m$ for any fixed $x_0$. The factor of $\lambda^{\epsilon}$ arises in adapting the argument immediately after \cite[(17)]{B_geosieve}:
any nonzero $f_i(x)$ can have only $O_{f_i}(1)$ prime factors $p > \lambda$, but it may have $O_{f_i, \epsilon}(\lambda^{\epsilon})$
prime factors $p > \lambda/m$.
\end{proof}

\section{Application of the geometric sieve: Proof of Proposition \ref{prop:gen_ld_simp2}}\label{sec:ld}

In this section we prove the bound \eqref{eq:ld_simp2} for each $\alpha < \frac{7}{48}$ for the `quartic' $(G, V)$ of Section \ref{sec:properties}.

For each $p \neq 2$ and
$i \in \{0, 4, 7, 8, 10, 11, 12 \}$ we define sets 
$U_i(p)$,  
each of which is a union of $G(\F_p)$-orbits on $V(\F_p)$, as follows.

\[
\begin{array}{c | c | c c}
\textnormal{Label} & \textnormal{Consists of}
& \text{Dimension} \ i & \text{Fourier contribution} \ \text{fc}(i)
\\ \hline
U_0 & 
\co_0 & 0 & -1 \\
U_4&
\co_{D1^2}
& 4 & -3 \\
U_7
&
\co_{D11}, \co_{C\rm s}
& 7 & -4 \\
U_8
&\co_{T11}, \co_{T2}, \co_{D2}, \co_{D\rm ns}, \co_{C\rm ns} &
8 & -5 \\
U_{10}&
\co_{1^2 1^2}, \co_{2^2}, \co_{1^3 1}, \co_{1^4}
& 10 & -6 \\
U_{11}&
\co_{1^2 11}, \co_{1^2 2}
& 11 & -7 \\
U_{12}&
\text{nonsingular orbits}
& 12 & -8 \\
\end{array}
\]
In Section \ref{subsec:2sym23-subschemes} we proved that for each $i \in  \{0, 4, 7, 8, 10, 11, 12 \}$ 
there are subschemes $\sillyX_i$  of $\mathbb{A}^r_{\Z} = V$ of dimension $i$, defined over $\Z$,
for which $U_i(p) \subseteq \sillyX_i(\F_p)$ and $\# \sillyX_i(\F_p) \ll p^i$ for all $p \not \in S$.
The function $\fc(i)$ is chosen such that
$|\widehat{\Psi_p}(x)| \leq 2 p^{\fc(i)}$ for each $x \in U_i(p)$. 

For every squarefree $n \in [N, 2N]$ (with $N < X^{\alpha}$ for $\alpha$ to be determined)
we consider the contribution to \eqref{eq:ld_simp2} from 
every factorization 
\begin{equation}\label{eq:factor_n}
n = n_0 n_4 n_7 n_8 n_{10} n_{11} n_{12}
\end{equation}
and those
$x$ with $x \in U_i(p)$ for each $p \mid n_i$.
When $n$ is even we will assume as a bookkeeping device that
$n_{0}$ is as well, but we will never
demand any geometric condition on $x$ modulo $2$.

The contribution of each such $x$ is bounded above by
$2^{\omega(n) + 1} \prod_i n_i^{\fc(i)}$, where the $2^1$ factor reflects the trivial
bound $|\widehat{\Psi_2}(x)| \leq 1$,
and we write
$2^{\omega(n)} = O_{\epsilon}(X^{\epsilon})$, uniformly in $n$.

We consider the following choices of parameters:
\begin{itemize}

\item Squarefree and pairwise coprime integers $n_i$ for $i \in \{ 0, 4, 7, 8, 10, 11, 12 \}$, with $\prod_i n_i \in [N, 2N]$.

\item A parameter $j \in \{ 4, 7, 8, 10, 11, 12 \}$, and a factorization $n_j = n_j' p n_j''$, where $p$ is a prime. 

\item Writing $m := n_j' \prod_{i < j} n_i$, these choices are subject to the condition that 
$m \leq Z < mp$.
\end{itemize}

We claim that every factorization \eqref{eq:factor_n} corresponds to at least one choice of the above data,
with $n_j = n_j' p n_j''$. First of all, note that $n_0 \leq Z$ for each
nonzero $x \in [-Z, Z]^{r}$.
Thus, given any factorization \eqref{eq:factor_n}, we let $j \geq 4$ be the minimal index
with $\prod_{i \leq j} n_i > Z$, choose $n_j'$ to be the largest divisor of $n_j$ less than or equal to
$Z \prod_{i < j} n_i^{-1}$, and choose $p$ to be any prime divisor of $n_j / n_j'$.

\medskip

The conditions modulo $m$, namely that $x \in U_i(q)$ for each odd  prime $q \mid m$ with $i = i(q)$ determined by the factorization
above, are equivalent to demanding that $x$ lie in one of $O_{\epsilon}(X^{\epsilon} n_j'^{j} \prod_{i < j} n_i^i)$ 
residue classes $\pmod{m V(\Z)}$.
(Here
$X^{\epsilon}$ is a simple upper
bound for $C^{\omega(n)}$, the product of the implied
constants occurring in the point counts for the $U_i(p)$.)

We must have $x \in \sillyX_{j}(\F_p)$, and for each of the residue classes
$\pmod{m V(\Z)}$ determined above we 
use Bhargava's geometric sieve
(Theorem \ref{lem:geosieve}) to bound the number 
of pairs $(x, p)$ where $x \in V(\Z)$ lies in this residue class, 
has all coefficients bounded by $Z$, and lies in $\sillyX_j(\F_p)$, and where
$p \not \in S$ lies in a dyadic interval $[P, 2P]$. By the theorem, the number of such pairs is
$\ll Z^\epsilon (Z/m)^{j} P$.

(Any contribution of $(x, p)$ with $p$ in the exceptional set $S$ of Proposition \ref{prop:same_dim} trivially satisfies the same bound, as in this case 
$Z/m \ll_S 1$.)

The Fourier contribution of each $x$ being counted is
$\ll X^{\epsilon} \prod_i n_i^{\fc(i)}$, and for each choice of
$j$, $n_i$ ($i < j$), and $n_j'$, and for each fixed dyadic interval
$[P, 2P]$, we multiply: the number of residue classes modulo $mV(\Z)$;
the number of pairs $(x, p)$ in each; the Fourier contribution of each $x$ being counted;
and the $N^{1 + \epsilon}/mP$ choices of $n_j''$ and $n_i$ ($i > j$).
Recalling that $Z = NX^{-1/d + \eta}$, we conclude that the contribution to 
\eqref{eq:ld_simp2} from the choices previously determined is
\[
\ll_{\epsilon} X^{\epsilon} \cdot X \cdot
n_j'^{j} \prod_{i < j} n_i^i
\cdot 
\left( \frac{NX^{-1/d + \eta}}{m} \right)^j P \cdot
\prod_i n_i^{\fc(i)} \cdot \frac{N^{1 + \epsilon}}{mP}.
\]
Using the fact that $\fc(i)$ is a decreasing function of $i$, and summing over the $\ll X^{\epsilon}$ choices of dyadic interval
$[P, 2P]$, we see that this is
\[
\ll_{\epsilon} X^{\epsilon + r \eta} \cdot X^{1 - j/d} \cdot
\left(
\prod_{i < j} n_i^{i + \fc(i)}
\right)
\cdot 
(n_{j}')^{j + \fc(j)}
\cdot
\left( \frac{N}{m} \right)^{j + \fc(j) + 1}
\]
Now, since $i + \fc(i)$ is an increasing function of $i$ this is bounded above by
\begin{align*}
\ll_{\epsilon} & X^{\epsilon + r \eta} \cdot X^{1 - j/d} \cdot
m^{j + \fc(j)}
\cdot
\left( \frac{N}{m} \right)^{j + \fc(j) + 1}
\\
\ll_{\epsilon} & X^{\epsilon + r \eta} \cdot X^{1 - j/d} 
\cdot m^{-1} N^{j + \fc(j) + 1},
\end{align*}
and, now fixing only the parameter $j$, 
we sum over all $m \leq Z$ and (for each $m$)
the $\ll N^{\epsilon}$ choices of factorizations of $m$ to
obtain a total contribution 
\beq\label{eq:final_cont}
\ll_{\epsilon} X^{\epsilon + r \eta} \cdot X^{1 - j/d} 
\cdot N^{j + \fc(j) + 1}
\eeq
from all choices of \eqref{eq:factor_n} associated to this factor $j$. Up to an implied constant,
the total error is bounded above by the maximum of \eqref{eq:final_cont} over the six admissible values of $j$.
The quantity in \eqref{eq:final_cont} is:
\[
\begin{array}{c | c}
\textnormal{$j$} & \textnormal{Bound} (\times X^{\epsilon + r \eta})
\\ \hline
j  = 4 & X^{2/3} N^2\\
j  = 7 & X^{5/12} N^4\\
j  = 8 & X^{1/3} N^4\\
j  = 10 & X^{1/6} N^5\\
j  = 11 & X^{1/12} N^5\\
\ j  = 12 & N^5\\
\end{array}
\] 
The case $j = 7$ turns out to be
the bottleneck, and choosing $N = X^{\alpha}$ with any $\alpha < \frac{7}{48}$ we may choose $\eta$ and $\epsilon$ with $c := \frac{5}{12} + 4 \alpha + \epsilon + 12 \eta < 1$,
so that \eqref{eq:ld_simp2} holds with this value of $c$.

\section{Conclusion of the proof of Theorem \ref{thm:ap4}}\label{sec:ld_implies}
We give a slightly more general statement, which illustrates how improvements to Theorem \ref{thm:ap4} would automatically
follow from improvements in the level of distribution.

\begin{proposition}\label{prop:ld_gives_main}
Assume, for some integer $t \geq 1$, that Proposition \ref{prop:gen_ld_simp2} (and hence
Conclusion \ref{prop:gen_ld}) holds for some $c < 1$ and $\alpha > \Big(t + 1 - \frac{\log 4}{\log 3}\Big)^{-1}$.
Then there are $\gg_{t, \alpha, c} \frac{X}{\log X}$ $S_4$-quartic field discriminants $K$
with $|\Disc(K)| < X$, such that $\Disc(K)$ has at most $t$ prime factors.
\end{proposition}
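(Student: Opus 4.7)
The plan is straightforward: apply Theorem \ref{thm:ls} to the smoothed sequence $a(n) := \sum_{x\in V(\Z),\ |\Disc(x)|=n}\phi(xX^{-1/d})$, and then convert the resulting weighted count of $G(\Z)$-orbits with almost-prime discriminant into an unweighted count of $S_4$-quartic field discriminants. The hypotheses of Theorem \ref{thm:ls} are routine to verify. With $r=d=12$ we have $Y=X^{r/d}=X$; the level-of-distribution input \eqref{eq:sieve3} is exactly Conclusion \ref{prop:gen_ld}, which we are assuming; and the linear sieve inequality \eqref{eq:ls} follows from \eqref{eq:ls2}, which in turn holds because $\omega(p)=1/p+O(1/p^2)$, as is immediate from \eqref{eq:Psi_fourier_ternary} (or from a direct point count on $V(\F_p)$). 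The hypothesis $\alpha>(t+1-\log 4/\log 3)^{-1}$ rearranges to $t\geq 1/\alpha+\log 4/\log 3-1$, and Theorem \ref{thm:ls} then yields
\[
\sum_{\substack{n\leq X\\ p\mid n\Rightarrow p>X^{\alpha/4}\\ \nu(n)\leq t}}a(n)\gg \frac{X}{\log X}.
\]

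Since $\phi$ takes values in $[0,1]$ and is supported in a fundamental domain for $G(\Z)$ on $V(\R)$, this is a lower bound of the same order for the number of $G(\Z)$-orbits $x\in V(\Z)$ whose discriminant $n$ meets the indicated conditions. I would then remove two classes of orbits: (i) those for which $\Disc(x)$ is not squarefree; and (ii) reducible $x$, meaning the quartic ring $R_x$ parametrized by $x$ (via Bhargava \cite{HCL3}) is not an integral domain. For (i), any square divisor $p^2\mid\Disc(x)$ in our sum forces $p>X^{\alpha/4}$; the geometric sieve of Section \ref{sec:geom} applied to the codimension-$2$ subscheme cut out by $\Disc\equiv 0\pmod{p^2}$, together with \cite[Lemma 3.4]{BBP}, bounds the total contribution by
\[
\sum_{p>X^{\alpha/4}}\#\bigl\{x\in V(\Z)\cap X^{1/d}\mathrm{supp}(\phi):p^2\mid\Disc(x)\bigr\}\ll \sum_{p>X^{\alpha/4}}\frac{X}{p^2}\ll X^{1-\alpha/4},
\]
which is $o(X/\log X)$. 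For (ii), the reducible locus is a union of $G$-invariant closed subvarieties of $V$, each of dimension strictly less than $12$ (one per partition of $4$ other than $(4)$), so a standard lattice-point count in the box $X^{1/12}\cdot\mathrm{supp}(\phi)$ gives an upper bound $O_\epsilon(X^{11/12+\epsilon})$, in the spirit of the binary-cubic argument of Section \ref{sec:ap3}, with dimension bounds drawn from \cite{B_quartic}. This too is $o(X/\log X)$.

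Having excised these two classes, the surviving orbits correspond via \cite{HCL3} to maximal orders in quartic fields $K$: maximality follows from squarefreeness of $\Disc(x)$ since a non-maximal order of index $m$ has discriminant $m^2$ times that of the maximal one, and the bijectivity of the parametrization ensures each field is counted exactly once. Moreover, every such $K$ must have Galois group $S_4$: the conductor--discriminant formula forces a non-$S_4$ quartic $K$ to have non-squarefree $\Disc(K)$ --- $\Disc(K)$ is a perfect square for $A_4$-quartics; the unique quadratic subfield $F$ of a $C_4$- or $D_4$-quartic satisfies $\Disc(F)^2\mid \Disc(K)$; and for a $V_4$-quartic every ramified prime contributes a square factor, since ramification in one quadratic subfield forces ramification in a second. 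Thus squarefreeness alone excludes all non-$S_4$ fields, and we obtain $\gg X/\log X$ such $S_4$-quartic field discriminants.

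The main obstacle I foresee is the reducibility bound in (ii): controlling it cleanly requires enumerating the $G$-orbits of reducible elements and bounding the dimension of each stratum, a task essentially already carried out in \cite{B_quartic} but requiring careful quotation and some bookkeeping to adapt to the smoothed count. Everything else is formal, once the parametrization facts are lined up correctly.
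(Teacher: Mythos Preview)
Your approach is essentially the paper's, and the overall argument is correct. One technical point needs repair: you apply the geometric sieve to ``the codimension-$2$ subscheme cut out by $\Disc\equiv 0\pmod{p^2}$'', but $p^2\mid\Disc(x)$ is a mod-$p^2$ condition and does not literally define a subscheme of $V_{\F_p}$, so Theorem~\ref{lem:geosieve} does not apply to it directly; moreover \cite[Lemma~3.4]{BBP} is a statement about binary cubic forms. The paper fixes this by splitting into two pieces: nonmaximal quartic rings at $p$ are bounded by $O(X/p^2)$ via \cite[Proposition~23]{B_quartic}, while \emph{maximal} rings with $p^2\mid\Disc$ are strong multiples of $p$ in the sense of \cite{B_geosieve}, and these genuinely lie in a codimension-$2$ subscheme of $V$ over $\F_p$ to which the geometric sieve applies. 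With that adjustment your proof goes through. Two further remarks: the parametrization is of pairs $(Q,R)$ with $R$ a cubic resolvent, so to get each field counted once you need \cite[Corollary~5]{HCL3} that a maximal $Q$ has a unique resolvent; and your explicit exclusion of $A_4$, $C_4$, $D_4$, $V_4$ via squarefreeness of $\Disc(K)$ is correct and in fact more detailed than the paper, which handles $C_4$, $D_4$, $V_4$ through absolute irreducibility (citing \cite{HCL3}) and leaves the $A_4$ case tacit.
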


Since we proved Proposition \ref{prop:gen_ld_simp2} with any $\alpha < \frac{7}{48}$, we thus obtain
Theorem \ref{thm:ap4} with any $t > \frac{48}{7} - 1 + \frac{\log 4}{\log 3} = 7.119\dots$, and in particular with
$t = 8$.

\medskip
\begin{proof} 
We apply the weighted sieve of Theorem \ref{thm:ls}, with $Y = X$ and
\begin{equation}\label{eqn:ws_sum}
a(n) := \sum_{\substack{x \in V(\Z) \\ |\Disc(x)| = n}} \phi(x X^{-1/12}).
\end{equation}
Each sum is finite because $\phi$ is compactly supported. We then have
\[
\sum_{\substack{n < X \\ \ldmod \mid n}} a(n) = \sum_{\substack{x \in V(\Z)}} \Psi_\ldmod(x) \phi(x X^{-1/12}),
\]
where $\Psi_\ldmod$ is the characteristic function of $x \in V(\Z)$ with $\ldmod \mid \Disc(x)$.
By \eqref{eq:poisson_error}-\eqref{eq:def_error} the sequence satisfies the sieve axiom 
\eqref{eq:sieve2}, and by assumption Proposition \ref{prop:gen_ld_simp2} and therefore Conclusion \ref{prop:gen_ld} and
\eqref{eq:sieve3} hold. The linearity conditions \eqref{eq:ls} and \eqref{eq:ls2} follow from the first line of \eqref{eq:Psi_fourier_ternary}.

Theorem \ref{thm:ls} therefore implies that the sum of $\phi(x X^{-1/12})$, over all $x$
whose discriminants have at most $t$ prime factors, is $\gg \frac{X}{\log X}$. By construction
the count of such $x$ satisfies the same lower bound, and 
these discriminants are all in $(-X, 0) \cup (0, X)$ and are $G(\Z)$-inequivalent in $V(\Z)$.

By Bhargava \cite[Theorem 1]{HCL3}, these are in bijection with pairs $(Q, R)$, where $Q$ is a quartic ring
and $R$ is a cubic resolvent ring of $R$, and in case $Q$ is maximal then it has
exactly one cubic resolvent \cite[Corollary 5]{HCL3}.
Moreover, if $x \in V(\Z)$ corresponds to $(Q, R)$, then
$\Disc(x) = \Disc(Q)$.
As described on \cite[p. 1037]{HCL3},
$Q$ is an order in an $S_4$- or $A_4$-field
if and only if the corresponding $x \in V(\Z)$ is absolutely irreducible.

The number of $x$ which are not absolutely irreducible is $\ll X^{11/12 + \epsilon}$ and hence negligible; this is proved in
\cite[Lemmas 12 and 13]{HCL3}. In our case these proofs simplify
because we may ignore the cusp: the compact support of $\phi$ ensures that we are only counting points in a box of side length $O(X^{1/12})$, and that the number of points
with $a_{11} = 0$ is $O(X^{11/12})$.

We must then bound the number of pairs $(Q, R)$ where $Q$ is a nonmaximal $S_4$- or $A_4$-quartic order.
By Theorem \ref{thm:ls} the discriminants of
$x \in V(\Z)$ being counted have all of their prime factors $> X^{\alpha / 4}$,
and in particular any nonmaximal $Q$ which survives the sieve must be nonmaximal at
some prime $p > X^{\alpha/4}$. By \cite[Proposition 23]{B_quartic}, the 
number of such $x$ is
\[
\ll \sum_{p > X^{\alpha/4}} X/p^2 \ll X^{1 - \alpha/4},
\]
negligible for any $\alpha > 0$. 

This leaves the maximal $Q$ whose discriminants are divisible by $p^2$ for some $p$ in the same range. These can be handled
by the geometric sieve, precisely as Bhargava did in \cite{B_geosieve}. We apply the geometric sieve in its original formulation
directly to \eqref{eqn:ws_sum}, in contrast
to Section \ref{sec:ld} where we applied our variation after an application of Poisson summation.

Any maximal $Q$ whose discriminant is divisible by $p^2$ must be, in the language of Bhargava \cite{B_geosieve}, a {\itshape strong multiple}
of $p$; and hence (as in Section \ref{subsec:2sym23-subschemes})
the corresponding $x$ must be in $\sillyX(\F_p)$ for a suitably defined subscheme $\sillyX \subseteq V$ of codimension $2$.
By \cite[Theorem 3.3]{B_geosieve}, 
the number of such $x$ which satisfy this criterion for any
$p > X^{\alpha/4}$ is again $\ll X^{1 - \alpha/4}$.

In conclusion, the contributions of everything other than maximal orders in $S_4$-quartic fields to our sieve result is negligible,
and hence we obtain $\gg \frac{X}{\log X}$ $S_4$-quartic fields with the stated properties.
\end{proof}

\section*{Acknowledgments}
We would like to thank Theresa Anderson, Manjul Bhargava, Alex Duncan, \'Etienne Fouvry, Yasuhiro Ishitsuka, Kentaro Mitsui, Arul Shankar, Ari Shnidman, Jack Thorne, Jerry (Xiaoheng) Wang,
Melanie Matchett Wood and Kota Yoshioka for helpful discussions, comments and feedback.
Duncan, in particular, explained to us the application of the lemma from \cite{borel} to
the proof of Proposition \ref{prop:same_dim}.

Taniguchi's work was partially supported by the JSPS, KAKENHI Grant Numbers JP24654005, JP25707002, and JP17H02835.
Thorne's work was partially supported by the National Science Foundation under Grant No. DMS-1201330 and by the National Security Agency under a Young Investigator Grant.

\bibliographystyle{plain}
\bibliography{expo-sums}

\end{document}